\newtheorem{thm}{Theorem}[section]
\newtheorem{prop}[thm]{Proposition}
\newtheorem{lem}[thm]{Lemma}
\newtheorem{cor}[thm]{Corollary}
\newtheorem{conj}[thm]{Conjecture}
\theoremstyle{definition}
\newtheorem{defn}[thm]{Definition}
\newtheorem{example}[thm]{Example}
\newtheorem{rem}[thm]{Remark}
\newtheorem{ques}[thm]{Question}
\def\cocoa{{\hbox{\rm C\kern-.13em o\kern-.07em C\kern-.13em o\kern-.15em A}}}
\theoremstyle{remark}
\newcommand{\pr}[1]{{\mathbb P}^{#1}}
\newcommand{\skipit}[1]{{}}
\newcommand{\prfend}{\hbox to7pt{\hfil}
\par\vskip-\baselineskip\hbox to\hsize
{\hfil\vbox {\hrule width6pt height6pt}}\vskip\baselineskip}
\newcommand {\HH}{{\mathcal H}}
\def\eatit#1{}
\newcommand{\myarrow}[2]{\hbox to #1pt{\hfil$\to$\hfil}{\hskip-#1pt{\raise
10pt\hbox to#1pt{\hfil$\scriptscriptstyle #2$\hfil}}}}
\begin{document}

\title{Star configurations in $\pr n$}

\author{A.\ V.\ Geramita, B.\ Harbourne \& J.\ Migliore}

\address{A.\ V.\ Geramita\\
Department of Mathematics\\
Queen's University\\
Kingston, Ontario, and\\
Dipartimento di Matematica\\ 
Universit\`a di Genova\\ 
Genova, Italia}
\email{anthony.geramita@gmail.com}

\address{Brian Harbourne\\
Department of Mathematics\\
University of Nebraska\\
Lincoln, NE}
\email{bharbour@math.unl.edu}

\address{Juan Migliore\\
Department of Mathematics\\
University of Notre Dame\\
South Bend, IN}
\email{migliore.1@nd.edu}

\date{March 25, 2012}

\markboth{A.\ V.\ Geramita, B.\ Harbourne, J.\ Migliore}{Symbolic powers of star configurations}

\thanks{Acknowledgments:  Geramita thanks 
the NSERC for research support, while Harbourne and Migliore thank the National Security Agency 
which sponsored this research
under Grant/Cooperative
agreement H98230-11-1-0139 for Harbourne and 
H98230-12-1-0204 for Migliore, and thus
the United States Government is authorized to reproduce and distribute reprints
notwithstanding any copyright notice.
Migliore  was also partially supported by a grant from the
Simons Foundation (\#208579).}

\begin{abstract}
Star configurations are certain unions of linear subspaces of projective space.  They 
have appeared in several different contexts: the study of extremal Hilbert functions for 
fat point schemes in the plane; the study of secant varieties of some classical 
algebraic varieties; the study of the resurgence of projective schemes. In this paper 
we study some algebraic properties of the ideals defining star configurations,
including getting partial results about Hilbert functions, generators and minimal free resolutions 
of the ideals and their symbolic powers.
We also show that their symbolic powers define arithmetically Cohen-Macaulay
subschemes and we obtain results about the primary decompositions of the powers
of the ideals. As an application, we compute the resurgence for the ideal of the 
codimension $n-1$ star configuration  
in $\pr{n}$ in the monomial case (i.e., when the number of hyperplanes is $n+1$).
\end{abstract}

\keywords{star configurations, symbolic power, basic double G-link, arithmetically Cohen-Macaulay, ACM, primary decomposition, resurgence}

\subjclass[2000]{Primary 14N20, 13D40, 13D02; Secondary 14M07, 14Q99, 14C20.}

\maketitle

\section{Introduction}

A \emph{star configuration} of codimension $c$ in $\pr n$ is a 
certain union of linear subspaces $V_1,\ldots,V_i$ each of codimension $c$.
These have arisen as objects of study in numerous research projects lately, including
\cite{PSC, BH, CChG, CHT, GMS, MN7, TVV}, but these references make use of only 
a partial understanding of the properties of star configurations.
Thus it is of interest to understand them better.

Here we study powers and symbolic powers of ideals of star configurations in $\pr n$ 
(over an algebraically closed field of arbitrary characteristic). Since the subspaces $V_i$ are distinct
with none containing any of the others, and each is a complete intersection, the $m$th symbolic power 
of the ideal $I$ of the star configuration is $I^{(m)}=I(V_1)^m\cap\cdots\cap I(V_i)^m$. 

Combinatorially equivalent collections of linear spaces can have very different algebraic properties,
as Example \ref{ACMvNotACM} shows by exhibiting two collections of lines in $\pr 3$
with the same intersection posets but where one gives an arithmetically Cohen-Macaulay
(ACM) subscheme 
and the other not. The situation with star configurations (defined below 
in terms of their intersection posets)
is very different. We will show in Proposition \ref{basic facts}
that every star configuration is ACM, with so-called generic Hilbert
function (meaning that the $h$-vector coincides with the dimension of the appropriate coordinate ring until a prescribed degree and is then zero).  We also  note that this property does not characterize 
star configurations, since (at least in codimension two) there exist unmixed configurations of linear varieties with the same Hilbert functions as star configurations, which are themselves not even ACM
(see Remark \ref{sameHF}).

We then show that  every symbolic power of the ideal of a star configuration of any codimension 
defines an ACM subscheme (see Theorem \ref{sym pwrs acm}). 
This contrasts with what we will see in Example \ref{ACMvNotACM}.
We also pose a conjecture for the primary decompositions of powers of ideals of star configurations
and in some cases verify the conjecture (see Conjecture \ref{hopefulconj} 
and Theorem \ref{monomialprdecomp}). As an application
we use Theorem \ref{monomialprdecomp} to determine  
the resurgence $\rho(I)$ of the ideal $I$ of a positive dimensional star configuration
(see Theorem \ref{resurgenceThm}). The only other exact determination of
the resurgence of a positive dimensional subscheme which is not a cone over a 
0-dimensional subscheme and for which the resurgence is bigger than 1
is that of \cite{GHVT}, using a different method.
\section{Preliminaries}

We let $R = k[x_0,\dots,x_n]$ where $k$ is an arbitrary infinite field, and where we regard
$R$ as a graded ring with the usual grading (where each variable has degree 1
and nonzero elements of $k$ have degree 0).

\begin{defn}\label{Stardef}
Let $\HH= \{H_1,\dots,H_s\}$ be a collection of $s\geq 1$ 
distinct hyperplanes in $\pr n$ corresponding to linear forms 
$L_1,\dots,L_s$. We assume that the hyperplanes \emph{meet properly}, by which we mean
that the intersection of any $j$ of these hyperplanes is either empty or has codimension $j$.  
For any $1 \leq c \leq \min(s,n)$, let $V_c(\HH,\pr n)$ be the union of the codimension $c$ linear varieties defined 
by all the intersections of these hyperplanes, taken $c$ at a time:
\[
V_c(\HH,\pr n) = \bigcup_{1 \leq i_1 < \dots < i_c \leq s} H_{i_1} \cap \dots \cap H_{i_c}.
\]
(When $\pr n$ or $\HH$ is clear from the context, we may write 
$V_c$ or $V_c(\pr n)$ or $V_c(\HH)$ in place of $V_c(\HH,\pr n)$.) 
We call $V_c$ {\em the codimension $c$ skeleton} associated to $\HH$ 
or sometimes simply a {\em codimension $c$ star configuration}.
We denote by $V_c^{(\ell)}$ the subscheme of $\pr n$ defined by the ideal 
\[
I_{V_c}^{(\ell)} = \bigcap_{1 \leq i_1 < \cdots < i_c \leq s} (L_{i_1},\ldots,L_{i_c})^\ell.
\]
Note that $I_{V_c}^{(\ell)}$ is the {\em $\ell$-th symbolic power} of $I_{V_c}$.  
\end{defn}

\begin{rem}
We are most interested in the case of star configurations in $\pr n$ for which $s\geq n+1$.
When $1\leq s\leq n$, the star is a either a linear subvariety of projective space or
a projective cone over a star in $\pr {s-1}$. But for some proofs it is convenient to allow
$s < n+1$.
\end{rem}

We now recall the following definition.  

\begin{defn}\label{ACMdef}
Let $Z\subseteq \pr n$ be a closed subscheme whose defining sheaf of ideals is ${\mathcal I}_Z$. If 
$H^i(\pr n, {\mathcal I}_Z(d))=0$ for all $d \in \mathbb Z$ and all $0<i\le \dim Z$, we say the scheme
$Z$ is {\em arithmetically Cohen-Macaulay} or {\em ACM}.  Note that 
this is equivalent to saying that the graded ring $R/I_Z$ is a Cohen-Macaulay ring \cite[Lemma 1.2.3]{migliore}.
\end{defn}

\begin{defn} For a scheme $V$ of codimension $c$ in $\mathbb P^n$ (not necessarily ACM), the {\em $h$-vector} of $V$ (or, more precisely, of $R/I_V$) is the $(n-c+1)$-st difference of the Hilbert function of $R/I_V$.
\end{defn}

\begin{example}\label{ACMvNotACM} 
Here we exhibit two subschemes of $\pr3$ consisting of linear subspaces with the same 
intersection poset, the same degree and arithmetic genus (and hence the same Hilbert polynomial), one being arithmetically Cohen-Macaulay (ACM) and the other not.  In both of these cases, \cocoa \cite{cocoa} shows that the symbolic squares of the ideals define subschemes which fail to be ACM. This shows that the ACM property for the reduced curve does not imply it for symbolic powers.

Let $Q$ be the nonsingular quadric surface in $\pr 3$. Recall that $Q$ is 
isomorphic to $\pr1\times \pr1$ and hence
has two rulings. Choose any four distinct lines $V_1,V_2,V_3, V_4$ from one of
the rulings and any four distinct lines $H_1,H_2, H_3, H_4$ from the other. Let  $p_i$ be the point where 
$H_i$ and $V_1$ meet, $i=1,2$, and let $q$ be a point on $H_2$ not on any of the other lines.

Consider the line $L$ in $\pr3$ through $p_1$ and $q$. Note that $Q$ does not contain $L$. 
We now have three subschemes: $C_1$, consisting of the reduced union of $V_1,\ldots,V_4, H_1,H_2$;
$C_2$, consisting of the reduced union of $L,V_2,V_3,V_4, H_1,H_2$; and
$C_3$, consisting of the reduced union of $V_1,V_2,V_3,V_4, H_1,H_2, H_3,H_4$.
Note for any $i$ and $j$ that $V_i \cup H_j$ is a hyperplane section of $Q$.
Thus $C_3$ is the complete intersection of $Q$ with four planes, these four being the planes determined by
the pairs of intersecting lines $(V_1,H_1)$, $(V_2,H_2)$, $(V_3,H_3)$, $(V_4,H_4)$.

Note that $C_3$ is the union of $C_1$ with the disjoint union $H_3\cup H_4$.
Since, as it is easy to see, ACM subschemes are connected, we see that $H_3\cup H_4$
is not ACM. Moreover, linked schemes are either both ACM or neither ACM \cite{migliore}.
Since $C_1$ is linked with $H_3\cup H_4$, we see that
$C_1$ is not ACM.
\eatit{Alternatively, the ideal sheaf of $C_1$ on $Q$ is ${\mathcal O}_{C_1}(-C_1)=
{\mathcal O}_{C_1}(-4V-2H)$.
Also, $V+H$ is a hyperplane section of $Q$.
The cohomology of divisors on $Q$ is known; in particular 
$h^1(Q, {\mathcal O}_{\mathbb P^3}(d)\otimes {\mathcal O}_Q(-C_1))=
h^1(Q, {\mathcal O}_Q(d(H+V)-C_1))$ is 1 for $d=2$ and for all other $d\ge0$ it is 0.
Using this and taking cohomology of the exact sequence
$0\to {\mathcal O}_Q(d(H+V)-C_1))\to {\mathcal O}_Q(d(H+V))\to {\mathcal O}_{C_1}(d(H+V))\to 0$
shows that $h^0({\mathcal O}_{C_1}(2(H+V))=10$.
Now from 
$0\to {\mathcal I}_{C_1}(d))\to{\mathcal O}_{\mathbb P^3}(d)\to {\mathcal O}_{C_1}(d(H+V))\to 0$
we now conclude that $h^1({\mathbb P^3}, {\mathcal I}_{C_1}(2))=1$, hence by Definition \ref{ACMdef} we see
that $C_1\subset \mathbb P^3$ is not ACM.
}

\eatit{A similar argument shows that both maps 
${\mathcal O}_{\mathbb P^3}(d)\to {\mathcal O}_Q(d(H+V))\to{\mathcal O}_{C_1}(d(H+V))$ are surjective
on global sections for $0\le d\ne 2$, and hence that $h^1({\mathbb P^3}, {\mathcal I}_{C_1}(d))=0$
for $0\le d\ne2$. By semicontinuity we  thus have $h^1({\mathbb P^3}, {\mathcal I}_{C_2}(d))=0$
for $0\le d\ne2$. Since $C_2$ is not contained in a quadric, we have
$h^0({\mathbb P^3}, {\mathcal I}_{C_2}(2))=0$, and by semicontinuity
we have $h^0(C_1,{\mathcal O}_{C_1}(2(H+V))\ge h^0(C_2,{\mathcal O}_{C_2}(2(H+V))$, so we conclude
that
$0\to {\mathcal I}_{C_2}(2)\to{\mathcal O}_{\mathbb P^3}(d)\to {\mathcal O}_{C_2}(d(H+V))\to 0$
is exact on global sections when $d=2$, and hence that  
$h^1({\mathbb P^3}, {\mathcal I}_{C_2}(2))=0$. Thus $C_2\subset \mathbb P^3$ is ACM.
}

Let $X$ be the 
union of $V_2,V_3,V_4, H_1,H_2$, so that $X \cup L = C_2$.  We see 
that $X$ is directly linked, by the complete intersection of $Q$ and three 
planes, to $H_3$, and thus $X$ is ACM.  Also, since $L$ meets $Q$ in the two 
points $p_1,q \in X \subset Q$, the ideal $I_X + I_L$ defines the 
reduced scheme $\{p_1\} \cup \{q\}$. The latter is a complete intersection 
of type $(1,1,2)$, and $Q \notin I_L$ (where by abuse of notation, 
$Q$ also represents the quadratic form defining the quadric surface), 
so in fact $I_X + I_L$ is saturated. Then from the exact sequence
\[
0 \rightarrow I_{C_2} \rightarrow I_X \oplus I_L \rightarrow I_X + I_L \rightarrow 0,
\]
sheafifying and taking cohomology it follows immediately that the first cohomology of 
$\mathcal I_{C_2}$ is zero in all twists, so $C_2$ is ACM.
Notice that both $C_1$ and $C_2$ consist of 6 lines and thus have the same degree,
and since the intersection poset of both curves is the same, then both have the same 
arithmetic genus and hence the same Hilbert polynomial.
Checking computationally using \cocoa, we verified that the symbolic square of 
the ideal of neither curve is ACM.  
\end{example}

Recall from \cite{KMMNP} the following result.

\begin{prop}\label{bdgl}
Let $I_C$ be a saturated  ideal defining a  codimension $c$ subscheme $C\subseteq \pr n$.  
Let $I_S \subset I_C$ be an ideal which defines an ACM subscheme $S$ of codimension $c-1$.  
Let $F$ be a form of degree $d$ which is not a zerodivisor on $R/I_S$.
Consider the ideal $I' = F \cdot I_C + I_S$ and let $C'$ be the subscheme it defines.
Then $I'$ is saturated, hence equal to $I_{C'}$, and there is 
an exact sequence
\[
0 \rightarrow I_S(-d) \rightarrow I_C(-d) \oplus I_S \rightarrow I_{C'} \rightarrow 0.
\]
In particular, since $S$ is an ACM subscheme of codimension one 
less than $C$, we see that $C'$ is an ACM subscheme if and only if $C$ is.  Also, 
\[
\deg C' = \deg C + (\deg F)\cdot (\deg S).
\]
Furthermore, as sets 
on $S$, we have $C' = C \cup H_F$, where $H_F$ is the 
hypersurface section cut out on $S$ by $F$.  The Hilbert function 
$h_{C'}$ of $R/I_{C'}$ is $h_{C'}(t)=h_S(t)-h_S(t-d)+h_C(t-d)$.
\end{prop}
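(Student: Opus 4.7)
The plan is to construct the claimed exact sequence first, and then to read every other assertion off from it using either a short Hilbert-function calculation or a bit of sheaf cohomology, with the ACM hypothesis on $S$ providing all the vanishing needed.

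To build the sequence I would define $\varphi \colon I_C(-d)\oplus I_S \to I'$ by $\varphi(a,b)=Fa+b$. This map is surjective by the very definition $I' = F\cdot I_C + I_S$. For the kernel, $Fa+b=0$ implies $Fa=-b\in I_S$, and since $F$ is a non-zerodivisor on $R/I_S$ this forces $a\in I_S$; conversely any $a\in I_S\subseteq I_C$ gives a kernel element $(a,-Fa)$. Hence $a\mapsto (a,-Fa)$ identifies $I_S(-d)$ with $\ker\varphi$, yielding
\[
0\to I_S(-d)\to I_C(-d)\oplus I_S \to I' \to 0.
\]

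Next I would sheafify and run the long exact cohomology sequence in each twist $t$. Because $S$ is ACM of dimension $n-c+1\ge 1$, we have $H^j(\mathcal I_S(t))=0$ for all $t$ and all $0<j\le n-c+1$. The vanishing of $H^1(\mathcal I_S(t-d))$ makes the map $H^0(\mathcal I_C(t-d))\oplus H^0(\mathcal I_S(t)) \to H^0(\mathcal I_{C'}(t))$ surjective, which, combined with the saturation of $I_C$ and $I_S$, forces $I'_t = H^0(\mathcal I_{C'}(t))$ for every $t$; thus $I'$ is saturated and equals $I_{C'}$. For $0<i\le n-c$ the simultaneous vanishing of $H^i(\mathcal I_S(t))$ and $H^{i+1}(\mathcal I_S(t-d))$ collapses the long exact sequence to isomorphisms $H^i(\mathcal I_{C'}(t))\cong H^i(\mathcal I_C(t-d))$; since $\dim C' = \dim C = n-c$, this proves the ACM equivalence.

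The Hilbert-function identity then drops out by taking alternating dimensions in graded pieces of the exact sequence and substituting $\dim I_X(t) = \dim R_t - h_X(t)$. Passing to Hilbert polynomials, $p_S(t)-p_S(t-d)$ has degree $\dim S -1 = n-c$ with leading coefficient $d\cdot\deg S/(n-c)!$, so reading off leading terms of $p_{C'}(t) = p_S(t)-p_S(t-d)+p_C(t-d)$ yields $\deg C' = \deg C + (\deg F)(\deg S)$. The set-theoretic claim follows from $V(I') = V(F\cdot I_C + I_S) = V(I_S)\cap\bigl(V(F)\cup V(I_C)\bigr) = H_F \cup (C\cap S) = H_F\cup C$, using $I_S\subseteq I_C$ so $C\subseteq S$. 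The main obstacle is purely organizational: keeping track of which cohomological vanishing is needed where --- the $H^1$ vanishing of $\mathcal I_S$ upgrades the module sequence to the identification $I'=I_{C'}$, while the full ACM range of vanishing of $\mathcal I_S$ is what transports ACM-ness between $C$ and $C'$.
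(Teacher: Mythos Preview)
Your proof is correct. Note, however, that the paper does not actually prove this proposition: it is stated as a result recalled from \cite{KMMNP} (Basic Double G-Linkage), with no proof given in the present paper. Your argument---building the short exact sequence directly from the definition of $I'$, identifying the kernel via the non-zerodivisor hypothesis on $F$, then sheafifying and using the ACM vanishing for $\mathcal I_S$ to obtain both saturation of $I'$ and the isomorphisms $H^i(\mathcal I_{C'}(t))\cong H^i(\mathcal I_C(t-d))$---is exactly the standard proof one finds in the liaison literature, and all the numerical consequences (Hilbert function, degree, set-theoretic description) are read off correctly.
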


\begin{rem}
Under rather mild assumptions, the subscheme $C'$ obtained in Proposition \ref{bdgl} can be linked in two steps to $C$ via Gorenstein ideals, and it was in this context that it was introduced in \cite{KMMNP}.  We will not use this fact below, but it is worth noting that in the literature this construction is often referred to as {\em Basic Double G-Linkage}.
\end{rem}

As an application we use Proposition \ref{bdgl} to obtain the following result. 
For this we make a definition.

\begin{defn} Let $I$ be a nonzero homogeneous ideal in the ring $R$. We define 
$\alpha(I)$ to be the
least degree among degrees of nonzero elements of $I$.
\end{defn}

We recover the fact of \cite[Lemma 2.4.2]{BH} that the initial degree $\alpha(I_{V_c})$ 
of $I_{V_c}$ (i.e., the degree of a non-zero homogeneous element of least degree) is $s-c+1$.
We also note that in the case where the hyperplanes are defined by the $s=n+1$
coordinate variables, $V_c$ was known to be ACM (see \cite[Example 2.2(b)]{HeHi}).

\begin{prop} \label{basic facts}
Let $\HH= \{H_1,\dots,H_s\}$ be a collection of distinct hyperplanes in $\pr n$
meeting properly, and let $V_c=V_c(\HH)$. Then we have the following facts.
\begin{enumerate}
\item $V_c$ is ACM.

\item The $h$-vector of $V_c$, which has $s-c+1$ entries, is
\[
\left (1,c,\binom{c+1}{2},\dots, \binom{s-1}{c-1} \right ) = 
\left ( 1,\binom{c}{c-1}, \binom{c+1}{c-1},\dots,\binom{s-1}{c-1} \right ).
\]
Note that the last binomial coefficient occurs in degree $s-c$, and can also be written $\binom{(c-1)+(s-c)}{s-c}$. 

\item $\displaystyle \deg V_c = \binom{s}{c}$.

\item The initial degree of $I_{V_c}$ is $\alpha(I_{V_c})=s-c+1$, and all of its minimal generators occur in this degree
and are monomials in the linear forms $L_i$ defining the hyperplanes $H_i$.

\end{enumerate}
\end{prop}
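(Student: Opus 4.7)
The plan is to prove all four parts simultaneously by a nested induction: induction on the codimension $c$, and within it induction on the number $s$ of hyperplanes. Write $\HH_k=\{H_1,\ldots,H_k\}$. The key structural input is the ideal identity
\[
I_{V_c(\HH_s)} \;=\; L_s\cdot I_{V_c(\HH_{s-1})} \,+\, I_{V_{c-1}(\HH_{s-1})},
\]
which reflects the decomposition of $V_c(\HH_s)$ into the components avoiding $H_s$ (namely $V_c(\HH_{s-1})$) and those containing $H_s$ (namely $H_s\cap V_{c-1}(\HH_{s-1})$). The ideal-level equality follows from a short argument using that $L_s$ is a nonzerodivisor on $R/I_{V_c(\HH_{s-1})}$, guaranteed by the proper-intersection hypothesis. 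This identity is precisely the basic double G-link of Proposition~\ref{bdgl} applied with $C=V_c(\HH_{s-1})$, $S=V_{c-1}(\HH_{s-1})$, and $F=L_s$ (which is also a nonzerodivisor mod $I_S$, for the same reason).

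The base cases are $c=1$, where $V_1$ is a degree-$s$ hypersurface, and $s=c$, where $V_c$ is the complete intersection $H_1\cap\cdots\cap H_c$; in both, all four statements are immediate. For the inductive step (with $c\ge 2$ and $s\ge c+1$), the inductive hypothesis yields all four parts for both $C$ and $S$. Part~(1) is then immediate from Proposition~\ref{bdgl}. For part~(3), the degree formula there gives $\deg V_c(\HH_s)=\deg C+\deg S=\binom{s-1}{c}+\binom{s-1}{c-1}=\binom{s}{c}$ by Pascal's rule. For the generator portion of part~(4), the displayed ideal identity, combined with the inductive description of the generators of $I_C$ and $I_S$ as monomials (of degrees $s-c$ and $s-c+1$, respectively) in $L_1,\ldots,L_{s-1}$, exhibits a generating set for $I_{V_c(\HH_s)}$ consisting entirely of monomials of degree $s-c+1$ in $L_1,\ldots,L_s$; in particular every minimal generator lies in this degree and $\alpha(I_{V_c(\HH_s)})\le s-c+1$.

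The technical heart of the proof is part~(2). Applying the $(n-c+1)$-st finite difference operator $\Delta^{n-c+1}$ to the Hilbert function identity $H_{C'}(t)=H_S(t)-H_S(t-1)+H_C(t-1)$ from Proposition~\ref{bdgl}, and exploiting that $S$ has codimension $c-1$ so that $\Delta^{n-c+1}(H_S(t)-H_S(t-1))=\Delta^{n-c+2}H_S(t)$ is itself the $h$-vector of $S$ at $t$, yields the clean recursion
\[
h_{V_c(\HH_s)}(t)\;=\;h_{V_{c-1}(\HH_{s-1})}(t)\,+\,h_{V_c(\HH_{s-1})}(t-1).
\]
Substituting the inductive formulas $\binom{c-2+t}{c-2}$ and $\binom{c-2+t}{c-1}$ and invoking Pascal's identity produces $\binom{c-1+t}{c-1}$; the boundary entries at $t=0$ and $t=s-c$ are confirmed by short direct checks. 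With the $h$-vector in hand, the classical Chu-Vandermonde-type identity $\sum_{i=0}^{t}\binom{c-1+i}{c-1}\binom{n-c+t-i}{n-c}=\binom{n+t}{n}$ shows that $H_{R/I_{V_c(\HH_s)}}(t)=\binom{n+t}{n}$ for $0\le t\le s-c$, whence $\alpha(I_{V_c(\HH_s)})\ge s-c+1$, completing part~(4).

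The main obstacle is the shift bookkeeping in the $h$-vector recursion, where one must track both the shifts coming from Proposition~\ref{bdgl} and the difference in codimension between $C$ and $S$; once that recursion is in hand, every remaining assertion reduces to a direct computation using Proposition~\ref{bdgl} together with standard binomial identities.
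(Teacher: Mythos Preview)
Your proof is correct and follows essentially the same inductive argument as the paper, via the basic double link identity $I_{V_c(\HH_s)}=L_s\cdot I_{V_c(\HH_{s-1})}+I_{V_{c-1}(\HH_{s-1})}$ together with Proposition~\ref{bdgl}. One minor remark: the Chu--Vandermonde step for $\alpha\ge s-c+1$ is superfluous, since once you have exhibited a generating set consisting entirely of degree-$(s-c+1)$ monomials in the $L_i$ (and the ideal is nonzero), $\alpha=s-c+1$ is immediate.
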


\begin{proof}
Notice that (3) is trivial, and we include it only for completeness.  We proceed by induction on $c$ and 
on $s  \geq c$. For any $c$, note that if $s=c$ then $V_c$ is a complete intersection of linear forms, and 
parts (1) to (4) are trivial.  
If $c=1$ and $s$ is arbitrary, $V_1$ is the union of $s$ hyperplanes, and all four assertions are 
immediate.
Now assume that the assertion is true for codimension $c-1$ and for up to $s-1$ hyperplanes.  
Let $\mathcal H' = \{ H_1,\dots,H_{s-1} \}$ and let $\mathcal H = \mathcal H' \cup \{ H_s \}$.
By induction, $V_{c-1} (\mathcal H')$ and $V_c(\mathcal H')$ are both ACM.  
We now apply Proposition \ref{bdgl} to $S = V_{c-1}(\mathcal H')$,  $C = V_c(\mathcal H')$, 
and $F = L_s$, the defining 
polynomial of $H_s$.  Since $V_c (\mathcal H) = V_c (\mathcal H') \cup H_{L_s}$, where $H_{L_s}$ is 
the hyperplane section of $V_{c-1}$ cut out by $H_s$, we immediately have (1).  Since we have 
$I_{V_c(\mathcal H)} = L_s \cdot I_{V_c(\mathcal H')} + I_{V_{c-1}(\mathcal H')}$, and by
induction minimal sets of generators of $I_{V_c(\mathcal H')}$ and $I_{V_{c-1}(\mathcal H')}$
are monomials in the $L_i$ of degree $s-c$ and $s-c+1$, respectively, we 
see $I_{V_c(\mathcal H)}$ is also generated by monomials in the $L_i$, and that the generators all
have degree $s-c+1$, which proves (4).  

It remains to prove (2).  We use the Hilbert function part of Proposition \ref{bdgl}, still with $S = V_{c-1}
(\mathcal H')$,  $C = V_c(\mathcal H')$, and $F = L_s$.  The 
$h$-vector of $V_c (\mathcal H')$ is the $(n-c+1)$-th difference of 
$h_{V_c (\mathcal H')}$, while the $h$-vector of $V_{c-1}(\mathcal H')$ 
is the $(n-c+2)$-th difference of $h_{V_{c-1}(\mathcal H')}$. Notice that 
$d=1$ in this case (in the statement of Proposition \ref{bdgl}), so the 
portion of the formula coming from $h_s(t) - h_s(t-d)$ amounts to a 
first difference. The $h$-vector of $V_c(\mathcal H')$ is
\[
\left ( 1, c, \binom{c+1}{c-1}, \dots, \binom{s-3}{c-1}, \binom{s-2}{c-1} \right ),
\]
where the last entry is in degree $s-c-1$, and the $h$-vector of $V_{c-1}(\mathcal H')$ is
\[
\left ( 1,c-1, \binom{c}{c-2}, \dots, \binom{s-3}{c-2}, \binom{s-2}{c-2} \right ),
\]
where the last entry is in degree $s-c$. Thus the $h$-vector of $V_c$ is computed by
\[
\begin{array}{cccccccccccccccccccc}
& ( & 1, & c-1, & \binom{c}{c-2}, & \binom{c+1}{c-2}, & \dots &, \binom{s-3}{c-2}, & \binom{s-2}{c-2} & ) \\
+ & ( & & 1, & c, & \binom{c+1}{c-1}, & \dots & , \binom{s-3}{c-1} ,&  \binom{s-2}{c-1} & )
\end{array}
\]
from which the desired $h$-vector of $V_c(\mathcal H)$ follows.
\end{proof}

\begin{rem}\label{sameHF}
\begin{itemize}
\item[(a)]
The $h$-vector given in Proposition \ref{basic facts} (2) is sometimes called a {\em generic} $h$-vector,
on account of its being the $h$-vector of a generic finite set of points.  Note that the ACM property automatically implies that $V_c$ is a so-called scheme of {\em maximal rank}, i.e. that the natural restriction map $H^0(\mathcal O_{\mathbb P^n}(d)) \rightarrow H^0(\mathcal O_{V_c}(d))$ has maximal rank for all $d$.  However, even for a scheme of the right degree, having maximal rank does not imply that $R/I_{V_c}$ has generic $h$-vector.  For example, when $s=4$ and $c=2$, $V_2$ has $h$-vector $(1,2,3)$, hence degree 6.  However, a general set of six skew lines in $\mathbb P^3$ has maximal rank \cite{HH} but has $h$-vector $(1,2,3,4,0,-4)$.

\item[(b)] Notwithstanding the comment in (a), there do exist linear configurations that are not ACM but nevertheless have generic $h$-vectors.  This is an easy consequence of the construction given in \cite{MN11}, starting with a minimal curve consisting of two skew lines.  Indeed, here we sketch the argument that {\em for every codimension two generic $h$-vector $(1,2,3,\dots)$ of degree at least 6, there is a non-ACM configuration of codimension two linear varieties with the given generic $h$-vector.}  We begin with curves in $\mathbb P^3$ and proceed inductively, repeatedly applying Proposition \ref{bdgl}. Start with a curve $C_0$ consisting of two skew lines in $\mathbb P^3$. Its $h$-vector is $(1,2,-1$).  Let $S_1$ be a union of four planes, such that $S_1$ contains $C_0$.  Note that $I_{S_1}$ is generated by a form of degree 4 that is the product of four linear forms.  Let $F_1$ be a general linear form.  Then $F_1 \cdot I_{C_0} + I_{S_1}$ is the saturated ideal of  a union of six lines, $C_1$, with $h$-vector $(1,2,3)$.  For $i \geq 2$ (but not $i=1$) we obtain $C_i$ inductively from $C_{i-1}$ by taking $S_i$ to be a union of $i+2$ planes containing $C_{i-1}$, and $F_i$ in each case to be a general linear form (choosing a new $F_i$ each time), and setting $I_{C_i} = F_i \cdot  I_{C_{i-1}} + I_{S_i}$.  Then $C_i$ has $h$-vector $(1,2,3,\dots,i+2$).  We then pass to the codimension two case by taking cones.
\end{itemize}
\end{rem}

\begin{rem}\label{GenericPerfectionRem}
By Proposition \ref{basic facts} (1) and (2), the artinian reduction of 
the homogeneous coordinate ring of $V_c$ is 
$k[y_1,\ldots,y_c]/{\mathfrak m}^{s-c+1}$, where ${\mathfrak m}=(y_1,\ldots,y_c)$.
Since ${\mathfrak m}^{s-c+1}$ is generated by the maximal (i.e, $r\times r$ for $r=s-c+1$) minors of the
$r\times s$ matrix
$$
\begin{pmatrix}
y_1 & y_2 & \cdots & y_c & 0 & \cdots & 0 & 0\\
0     & y_1 & y_2 & \cdots & y_c & 0 & \cdots & 0\\
& & & \cdots & & & & \\
0     & \cdots & 0 & y_1 & y_2 & y_3 & \cdots & y_c
\end{pmatrix}
$$
and has codimension $(s-c+1-r+1)(s-r+1)=c$,
the graded Betti numbers of  the homogeneous coordinate ring of $V_c$ are those given by 
the Eagon-Northcott resolution of the maximal minors of a generic matrix
of size $r\times s$ \cite{refEaHo}.   Note however, that it is well-known that
powers of ${\mathfrak m}$ all have linear resolution, consequently the calculation
of the graded Betti numbers of the powers is straightforward.  In particular, denoting by $\mathbb E^{s,c}_\bullet$ the minimal free resolution of $I_{V_c(\mathcal H)}$, we have
\begin{equation}\label{rk}
\hbox{rk } \mathbb E^{s,c}_i = \binom{s}{s-c+i} \cdot \binom{s-c+i-1}{i-1}.
\end{equation}

\end{rem}

We will need the next result for the proof of Theorem \ref{ss is acm}.

\begin{lem} \label{contain}
For each $c$, we have $I_{V_{c-1}(\mathcal H)} \subset I_{V_c(\mathcal H)}^{(2)}$.
\end{lem}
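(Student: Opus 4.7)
The plan is to reduce the containment to a check on a generating set of $I_{V_{c-1}(\mathcal H)}$, using the explicit description of the ideal of a star configuration together with the defining intersection form of the symbolic power.

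First, I would make precise the form of the minimal generators. The inductive proof of Proposition \ref{basic facts}(4) actually produces the generators recursively: at each step $I_{V_c(\mathcal H)}=L_s\cdot I_{V_c(\mathcal H')}+I_{V_{c-1}(\mathcal H')}$, and unwinding the induction (with base cases $c=1$ or $s=c$) shows that the minimal generators of $I_{V_c(\mathcal H)}$ are exactly the $\binom{s}{s-c+1}$ squarefree products $L_{j_1}\cdots L_{j_{s-c+1}}$ of $s-c+1$ distinct defining linear forms. The count matches $\mathrm{rk}\,\mathbb E^{s,c}_1$ from Remark \ref{GenericPerfectionRem}, so these are all of them. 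In particular, $I_{V_{c-1}(\mathcal H)}$ is generated by the squarefree products of any $s-c+2$ of the $L_i$.

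Next, by the definition of $I_{V_c}^{(m)}$, we have
\[
I_{V_c(\mathcal H)}^{(2)}=\bigcap_{1\le i_1<\cdots<i_c\le s}(L_{i_1},\ldots,L_{i_c})^2,
\]
so it suffices to fix an arbitrary $c$-subset $T=\{i_1,\ldots,i_c\}\subseteq\{1,\ldots,s\}$ and an arbitrary generator $L_{j_1}\cdots L_{j_{s-c+2}}$ of $I_{V_{c-1}(\mathcal H)}$, and show that this product lies in $(L_{i_1},\ldots,L_{i_c})^2$.

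This final step is pure pigeonhole: the index sets $J=\{j_1,\ldots,j_{s-c+2}\}$ and $T$ both sit inside $\{1,\ldots,s\}$, and $|J|+|T|=(s-c+2)+c=s+2$, so $|J\cap T|\ge 2$. Picking two distinct indices $a,b\in J\cap T$ exhibits two factors $L_a,L_b$ of $L_{j_1}\cdots L_{j_{s-c+2}}$ lying in $(L_{i_1},\ldots,L_{i_c})$, so the whole product lies in $(L_{i_1},\ldots,L_{i_c})^2$, as required. There is no real obstacle here; the only work is the initial bookkeeping identifying the generators of $I_{V_{c-1}(\mathcal H)}$ as squarefree degree-$(s-c+2)$ monomials in the $L_i$, after which the combinatorial estimate $|J|+|T|\ge s+2$ finishes the proof.
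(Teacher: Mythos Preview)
Your proof is correct and takes a genuinely different, more direct route than the paper's. The paper argues scheme-theoretically: since both ideals are unmixed, it localizes at a component of $V_c$, which reduces to the case $s=c$ where $I_{V_c}=(L_1,\ldots,L_c)$ is a single complete intersection; it then shows by a separate induction on $c$ (via the basic double linkage decomposition $I_{V_{c-1}(\mathcal H)}=L_c\cdot I_{V_{c-1}(\mathcal H')}+I_{V_{c-2}(\mathcal H')}$) that $I_{V_{c-1}}$ is generated by degree-two products $L_iL_j$, which visibly lie in $(L_1,\ldots,L_c)^2$. You instead work globally: having identified the generators of $I_{V_{c-1}(\mathcal H)}$ as all squarefree products of $s-c+2$ of the $L_i$ (a harmless strengthening of Proposition~\ref{basic facts}(4), obtained by the same induction), the pigeonhole inequality $|J|+|T|=s+2$ immediately forces two common indices with any $c$-subset $T$, and the containment follows. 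Your argument is shorter and avoids the localization step entirely; the paper's approach is perhaps more in the spirit of the surrounding scheme-theoretic machinery, but both ultimately rest on the same explicit description of the generators.
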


\begin{proof}
We have to show the inclusion $V_c(\mathcal H)^{(2)} \subset V_{c-1}(\mathcal H)$ of schemes.  
Since both sides are unmixed, it is enough to do this locally.  That is, we show that every 
component of $V_c(\mathcal H)^{(2)}$ lies in $V_{c-1}(\mathcal H)$.  To do this, it is 
enough to look only at the components of $V_{c-1}$ that contain the component of 
$V_c(\mathcal H)$ in question.  Now, $V_{c-1}$ is a union of codimension $c-1$ 
linear spaces and $V_c$ is its singular locus.  In particular, each component of 
$V_c$ is the intersection of $c$ of the hyperplanes $H_i$, so there pass $c$ 
components of $V_{c-1}$ through each component of $V_c$ (take away one 
$H_i$ at a time).  It thus is enough to set $\mathcal H = \{ H_1,\dots,H_c \}$ and 
prove it in this case.  Now $I_{V_c(\mathcal H)} = \langle L_1,\dots,L_c \rangle$ 
and $I_{V_c(\mathcal H)}^{(2)} = I_{V_c(\mathcal H)}^2$.  

On the other hand, let $\mathcal H' = \{ H_1,\dots, H_{c-1} \}$ and consider the 
codimension $c-1$ complete intersection  $I_{V_{c-1}(\mathcal H')} = 
\langle L_1,\dots,L_{c-1} \rangle$.  Thanks to Proposition \ref{bdgl}, we have 
\[
I_{V_{c-1}(\mathcal H)} = L_c \cdot I_{V_{c-1}(\mathcal H')} + I_{V_{c-2}(\mathcal H')}.
\]
We can thus use induction on $c$ (the low values are easy to check), and assume 
that $I_{V_{c-2}(\mathcal H')}$ is generated by degree two products of $L_1,\dots,L_{c-1}$, 
and since $I_{V_{c-1}(\mathcal H')}$ is just the complete intersection of the linear 
forms $L_1,\dots,L_{c-1}$, we have that $I_{V_{c-1}(\mathcal H)}$ is generated 
by degree two products of $L_1,\dots,L_c$.  This implies the asserted inclusion and completes the proof.
\end{proof}


\section{Symbolic powers of ideals of star configurations}

Given the ideal $I$ of a reduced ACM subscheme consisting of a union of linear spaces of projective 
space,
it's natural to ask whether the symbolic powers of $I$ also define ACM subschemes. They clearly do if
the linear subspaces are points, but otherwise it is not always the case,  
as Example \ref{ACMvNotACM} shows.
Thus Theorem \ref{sym pwrs acm}, showing that all 
of the symbolic powers of any ideal $I_{V_c}$ of 
a star configuration $V_c$ of any codimension $c$
do define ACM subschemes, 
is all the more interesting.

\begin{thm} \label{sym pwrs acm}
Let $\mathcal H = \{ H_1,\dots, H_s \}$ be hyperplanes in $\mathbb P^n$ and let $V_c := V_c(\mathcal 
H)$ for any $c$. Every symbolic power of $I_{V_c}$ is ACM.  Furthermore, if $L_i$ is a linear form 
defining the hyperplane $H_i$, then each symbolic power of
$I_{V_c}$ is generated by monomials in the $L_i$.
\end{thm}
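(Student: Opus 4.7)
The plan is a double induction on $s$ (number of hyperplanes) and $\ell$, with the base cases handled by Proposition \ref{basic facts} (which covers $\ell = 1$) and by $s = c$ (where $V_c$ is a complete intersection of $c$ linear forms, so $I_{V_c}^{(\ell)} = I_{V_c}^\ell$ is a power of a regular sequence, hence ACM and generated by monomials in the $L_i$).

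First I would establish the decomposition identity
\[
I_{V_c(\mathcal{H})}^{(\ell)} = \sum_{k=0}^{\ell} L_s^k \cdot \bigl( I_{V_c(\mathcal{H}')}^{(\ell)} \cap I_{V_{c-1}(\mathcal{H}')}^{(\ell - k)} \bigr),
\]
where $\mathcal{H}' = \mathcal{H} \setminus \{H_s\}$. This comes from the primary decomposition of $I_{V_c(\mathcal{H})}^{(\ell)}$: separate the $c$-subsets of $\{1,\dots,s\}$ according to whether they contain $s$, giving $I_{V_c(\mathcal{H}')}^{(\ell)} \cap \bigcap_{|J'|=c-1,\,J'\subset\{1,\dots,s-1\}} (L_{J'},L_s)^\ell$, then expand $(L_{J'},L_s)^\ell = \sum_k L_s^k (L_{J'})^{\ell - k}$ and reorganize. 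Once this identity is proved, the monomial-in-$L_i$ generation follows immediately: by the inductive hypothesis, each $I_{V_c(\mathcal{H}')}^{(\ell)}$ and $I_{V_{c-1}(\mathcal{H}')}^{(\ell-k)}$ is monomial in $L_1,\dots,L_{s-1}$, while intersection, sum, and multiplication by $L_s^k$ all preserve being monomial in the $L_i$.

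For the ACM property, my approach is to realize $V_c(\mathcal{H})^{(\ell)}$ as the final term of an iterated sequence of Basic Double G-Linkages (Proposition \ref{bdgl}). Starting from the ACM scheme $C_0 = V_c(\mathcal{H}')^{(\ell)}$ (ACM by induction on $s$), I would apply Proposition \ref{bdgl} at each step $k = 1,\dots,\ell$ with $F = L_s$ and $S_k$ an appropriately chosen ACM subscheme of codimension $c-1$, built from symbolic powers of $V_{c-1}(\mathcal{H}')$ (ACM by induction on $c$), so that the $k$-th step exactly contributes the $L_s^{\ell-k}$-layer of the decomposition above. Because each BDG step preserves ACM-ness and monomial generation, the terminal $C_\ell = V_c(\mathcal{H})^{(\ell)}$ is then ACM.

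The main obstacle will be selecting the correct $S_k$ at each BDG step so as to simultaneously (a) keep $S_k$ ACM, (b) satisfy the containment $I_{S_k} \subset I_{C_{k-1}}$ required by Proposition \ref{bdgl}, and (c) make the cumulative effect of the $\ell$ steps equal the decomposition above. The naive choice $S_k = V_{c-1}(\mathcal{H}')^{(k)}$ (in ascending order of $k$) fails (b) as soon as $\ell \geq 3$, because $L_s \cdot I_{V_c(\mathcal{H}')}^{(\ell)}$ is not contained in $I_{V_c(\mathcal{H})}^{(\ell)}$ for $\ell \geq 2$. A more delicate iteration is required, likely using symbolic powers of $V_{c-1}(\mathcal{H}')$ at carefully interpolated indices (or intersected with $I_{V_c(\mathcal{H}')}^{(\ell)}$), and Lemma \ref{contain} together with its generalizations will be used to verify each containment. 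Pinpointing this correct sequence of BDG ingredients is the central technical difficulty.
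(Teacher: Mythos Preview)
Your approach is genuinely different from the paper's, and the gap you yourself flag is real and, as far as the paper is concerned, unresolved by BDG methods.

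The paper does \emph{not} prove Theorem~\ref{sym pwrs acm} by iterated Basic Double G-Linkage. Instead it first reduces to the case $s=n+1$ (coordinate hyperplanes), observes that $I_{V_c}$ is then the Stanley--Reisner ideal of the complete $(n-c)$-skeleton of the simplex, notes that this simplicial complex is a matroid, and invokes the theorem of Minh--Trung and Varbaro that all symbolic powers of the Stanley--Reisner ideal of a matroid are Cohen--Macaulay. For $s>n+1$ it lifts to $\pr{s-1}$, applies the monomial case there, and cuts back down by a regular sequence of linear forms; ACM and the monomial-in-$L_i$ description survive hyperplane section. The BDG approach you outline is exactly what the paper uses in Theorem~\ref{ss is acm} (for $\ell=2$, any $c$) and Corollary~\ref{mixed mult codim 2} (for $c=2$, any $\ell$), but the paper does not claim to push it to general $(c,\ell)$.

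The central technical difficulty you name---choosing ACM schemes $S_k$ satisfying the required containments so that the cumulative BDG output is $V_c(\mathcal H)^{(\ell)}$---is precisely where your plan is incomplete. Already for $c=2$ the paper needs $s$ BDG steps to pass from $\ell$ to $\ell+2$, and the intermediate schemes $W_k$ are \emph{mixed} symbolic powers (different exponents on different components), not pure symbolic powers of $V_{c-1}(\mathcal H')$. For $c\ge 3$ the analogous intermediates would be more elaborate still, and you would need to know they are ACM to feed them into Proposition~\ref{bdgl}; your induction does not supply this. So the proposal, as it stands, does not constitute a proof, and the matroid route is what actually carries the theorem.

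One further caution on the monomial-generation half: your claim that intersection preserves ``generated by monomials in the $L_i$'' is delicate once $s>n+1$, since the $L_i$ are then linearly dependent and there is no intrinsic monomial theory. The paper handles this by doing the monomial argument in $\pr{s-1}$, where the $L_i$ \emph{are} the variables, and then specializing.
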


\begin{proof}
Say $s\leq n$. If $c=s$, then $V_c$ is a linear subvariety and a complete intersection
so the result is true.
If $c<s\leq n$, choose coordinates such that the hyperplanes $H_i$ are defined by coordinate variables
$x_1,\ldots,x_s$, and extend to a full set of coordinates $x_0,x_1,\ldots,x_n$.
Let $\Lambda$ be the linear subvariety defined by $x_0=x_{s+1}=\cdots=x_n=0$.
Then $\Lambda\cap V_c$ is a codimension $c$ star in $\Lambda\cong\pr{s-1}$, and $V_c$ is a 
projective cone over $\Lambda\cap V_c$. 
In addition to the canonical surjection $k[\pr n]\to k[\Lambda]$,
we have a non-canonical inclusion $k[\Lambda]=k[x_1,\ldots,x_s]\subseteq k[x_0,\ldots,x_n]=k[\pr n]$, 
with respect to which we have $I_{V_c}^{(m)}=I_{\Lambda\cap V_c}^{(m)}k[\pr n]$
since primary decompositions extend \cite[Exercise 4.7(iv)]{AM}.
Thus $k[\pr n]/I_{V_c}^{(m)}$
is a polynomial ring over $k[\Lambda]/I_{\Lambda\cap V_c}^{(m)}$, so the result
for $V_c\subset \pr n$ follows if and only if it follows for $\Lambda\cap V_c\subset\Lambda$.
Thus we may assume that $s\geq n+1$.

Now fix the codimension, $c$, so $V_c$ is the union of $\binom{s}{c}$ linear varieties.  First assume that $s = n+1$, so without loss of generality we may assume that $L_i = x_i$ for each $i$ (modulo $s$, so $L_s=x_0$). 

We claim that $I_{V_c}$ is the Stanley-Reisner ideal of a simplicial complex, $\Delta$, of dimension $n-c$ that is the complete simplicial complex of dimension $n-c$ on $n+1$ vertices.  To construct this simplicial complex, take for the $n+1$ vertices the $n+1$ coordinate points in $\mathbb P^n$.  For convenience of notation, we will label these points by $p_0,\dots,p_n$, and without loss of generality we will assume that the vertex labelled $p_i$ is the common intersection point of the hyperplanes defined by $x_0,\dots, \hat{x_i}, \dots,x_n$.  

The component of $V_c$ cut out by the hyperplanes  $x_{i_1} =0,\dots,x_{i_c} =0$ has dimension $n-c$.    The vertices that it does {\em not} contain are precisely $x_{i_1},\dots,x_{i_c}$; that is, this component corresponds to the face of $\Delta$ which
is the linear span of the vertices with the complementary labels.  There are $n+1-c$ such vertices, so $\Delta$ has dimension $n-c$.  By construction,  it is the complete simplicial complex of dimension $n-c$ on these vertices.  Thus by construction, the Stanley-Reisner ideal corresponding to this simplicial complex is the ideal of $V_c$.
This completes the proof of our claim.  

Recall that a simplicial complex $\Delta$ is said to be {\em pure} if all of its facets have the same dimension.  It is said to be a {\em matroid} if, for every subset $W$ of the vertices (in our case $\{p_0,\dots,p_n\}$), the restriction $\Delta_W = \{ F \in \Delta \ | \ F \subset W \}$ is a pure simplicial complex.  In our setting,  simplicial complex $\Delta$ is clearly a matroid, since the restriction is again complete.  

If $c = n$, the result clearly follows since any zero-dimensional scheme is ACM.  Thus we may assume that $c < n$, i.e. that our star configuration has dimension at least one. We now recall a key fact from \cite{MT} and \cite{V}:

\begin{quotation}
{\em Let $\Delta$ be a simplicial complex and let $I_\Delta$ be its Stanley-Reisner ideal.  Then $I_\Delta^{(\ell)}$ is Cohen-Macaulay for every $\ell \geq 1$ if and only if $\Delta$ is a matroid.}
\end{quotation}

\noindent It follows from these results that $I_{V_c}^{(\ell)}$ is Cohen-Macaulay for every $\ell$, i.e. that the corresponding schemes are ACM.

Now assume that $s > n+1$.  We still have 
$\mathcal H = \{ H_1,\dots, H_s \}$, hyperplanes in $\mathbb P^n$ where $H_i$ is 
the vanishing locus of a linear form $L_i$.  Without loss of generality we may assume that 
$L_s = x_0,L_1=x_1,\dots, L_n = x_n$.  We still denote by $V_c$ the codimension $c$ star 
configuration in $\mathbb P^n$ defined by $\mathcal H$.    Let $N = s-1$ and consider the 
star configuration $W_c \subset \mathbb P^N$ defined as in our first case above, with the 
variables $x_0,\ldots, x_N$.  

Consider the linear forms $M_{n+1} = x_{n+1} - L_{n+1}, \dots, M_{N} = x_N - L_N$.  It is clear that for an {\em ACM} subscheme $V$ of $\mathbb P^N$ meeting each of the corresponding hyperplanes, successively, in codimension 1, the {\em saturated} ideal of $I_V$ is obtained by replacing $x_i$ by $L_i$, for all $i = n+1,\dots,N$, since the ACM property and the assumption about the codimension guarantee that $M_{n+1},\dots,M_N$ are a regular sequence.  In particular, for any $i \geq n+1$, $x_i$ is replaced by $L_i$.  Thus the star configuration  $W_c$ and the schemes $W_c^{(\ell)}$ defined by its symbolic powers in $\mathbb P^N$ yield $V_c$ and the schemes $V_c^{(\ell)}$ as the result of a sequence of hyperplane sections.  Since the codimension is preserved, these hyperplane sections are all proper.  Since we have shown that $W_c^{(\ell)}$ are all ACM, the claimed result follows from the fact that the ACM property is preserved under proper hyperplane sections (see for instance \cite{migliore}).  From what we have done, the claim about the ideals is also immediate. It is also clear that $\alpha(I_{W_c^{(\ell)}})=\alpha(I_{V_c^{(\ell)}})$; we will use this in
Corollary \ref{initdegsmbpower}.
\end{proof}

Theorem \ref{sym pwrs acm} makes no assertion about the Hilbert function or the minimal free resolution (apart from its length) of the symbolic powers of the ideal of a star configuration.  In Theorem \ref{ss is acm}, only in the case of the symbolic square, we give a different proof of the fact that we obtain an ACM scheme, which allows us to describe the $h$-vector (equivalently, the Hilbert function) and the graded Betti numbers.
For the proof of the theorem, we will give an explicit construction of the symbolic square of $I_{V_c}$ for any $c$, in a way that 
makes it clear that it is ACM.  Rather than squaring $I_{V_c}$, throwing away higher codimensional 
primary components, and trying to verify that the result is ACM, we take a more direct approach. 
We construct an ideal for which it is easy to see that it is ACM, 
and then we show that this ideal is actually the symbolic square.  

We will use Proposition \ref{bdgl} with $C = {V_c}$ and $S = {V_{c-1}}$.  We will 
construct an ideal $I_{C'}$ with a special choice of $F$, so this gives right away that $C'$ is an ACM subscheme, since $C$ is.  
Furthermore, we can get the minimal free resolution of $I_{C'}$ from that of $I_C$ and $I_S$ by studying a suitable 
mapping cone.  We will then see that $C'$ is precisely the symbolic square of $C$ in this case.

\begin{thm} \label{ss is acm}
Let $\mathcal H = \{ H_1,\dots, H_s \}$ and let $V_i := V_i(\mathcal H)$ for all $i$. Then
\begin{enumerate}
\item The $h$-vector of $V_c^{(2)}$ is as follows
\[
\Delta^{n-c+1} h_{R/I_{V_c^{(2)}}}(t) = \left \{
\begin{array}{cl}
 \binom{t+c-1}{c-1} & \hbox{if $t \leq s-c$} \\ \\
 \binom{s}{c-1} & \hbox{if $s-c+1 \leq t \leq 2s-2c+1$ } \\ \\
 0 & \hbox{if $t > 2s-2c+1$}
\end{array}
\right .
\]

\item The minimal free resolution of $I_{V_c}^{(2)}$ has the form
\[
0 \rightarrow \mathbb F_c \rightarrow \cdots \rightarrow \mathbb F_1 \rightarrow \mathbb I_{V_c}^{(2)} \rightarrow 0
\]
where 
\[
\mathbb F_i = \mathbb E^{s,c}_i(-1+c-s) \oplus \mathbb E^{s,c-1}_{i-1}(-1+c-s) \oplus \mathbb E^{s,c-1}_i
\]
using the notation of Remark \ref{GenericPerfectionRem}.  In particular, 
\[
\mathbb F_i = R(-2s+2c-1-i)^{M_i} \oplus R(-s+c-1-i)^{N_i}
\]
where 
\[
M_i  =
\left \{
\begin{array}{ll}
 \binom{s}{s-c+1}   & \hbox{if } i = 1; \\ \\
 \binom{s}{s-c+i}\cdot \binom{s-c+i-1}{i-1} + \binom{s}{s-c+i} \cdot \binom{s-c+i-1}{i-2} & \hbox{if } 2 \leq i \leq c
 \end{array}
 \right.
\]
and
\[
N_i = 
\left \{
\begin{array}{ll}
\binom{s}{s-c+1+i} \cdot \binom{s-c+i}{i-1} & \hbox{if } 1 \leq i \leq c-1; \\ \\
0 & \hbox{if } i = c.
\end{array}
\right.
\]
\end{enumerate}
\end{thm}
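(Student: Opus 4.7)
The plan is to apply Proposition~\ref{bdgl} with $S = V_{c-1}(\mathcal H)$ and $C = V_c(\mathcal H)$ (both ACM by Proposition~\ref{basic facts}), taking as the form $F$ the specific element
\[
F \;=\; \sum_{|I|=s-c+1} L_I, \qquad L_I := \prod_{i\in I} L_i,
\]
which lies in $I_{V_c}$ and has degree $d = s-c+1$.  First I would verify that $F$ is a non-zerodivisor on $R/I_{V_{c-1}}$.  The associated primes of the squarefree monomial ideal $I_{V_{c-1}}$ are the $\mathfrak p_K = (L_k : k\in K)$ for $(c-1)$-subsets $K\subset\{1,\dots,s\}$.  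Modulo $\mathfrak p_K$ every summand $L_I$ with $I\cap K\ne\emptyset$ vanishes, leaving only the unique summand indexed by $I = K^c$.  Since the meet-properly hypothesis makes any $c$ of the $L_i$ linearly independent, each $L_i$ with $i\notin K$ remains nonzero in the integral domain $R/\mathfrak p_K$, so $L_{K^c}\not\equiv 0$ and $F\notin \mathfrak p_K$.  Proposition~\ref{bdgl} then produces an ACM subscheme $C'$ with saturated ideal $I_{C'} = F\cdot I_{V_c} + I_{V_{c-1}}$ and the short exact sequence $0\to I_{V_{c-1}}(-d)\to I_{V_c}(-d)\oplus I_{V_{c-1}}\to I_{C'}\to 0$.

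The crux of the proof is the identification $I_{C'} = I_{V_c}^{(2)}$.  The inclusion $I_{C'}\subset I_{V_c}^{(2)}$ is immediate: $F\in I_{V_c}$ gives $F\cdot I_{V_c}\subset I_{V_c}^2\subset I_{V_c}^{(2)}$, and Lemma~\ref{contain} handles $I_{V_{c-1}}$.  For the reverse, Theorem~\ref{sym pwrs acm} allows me to work monomial by monomial.  Using the elementary criterion that $\prod L_i^{a_i}\in I_{V_c}^{(2)}$ if and only if $\sum_{i\in K}a_i\ge 2$ for every $c$-subset $K$, a brief case analysis shows the minimal monomial generators of $I_{V_c}^{(2)}$ split into two classes: the $\binom{s}{c-2}$ squarefree monomials $L_J$ with $|J|=s-c+2$, which are precisely the generators of $I_{V_{c-1}}\subset I_{C'}$, and the $\binom{s}{c-1}$ pure squares $L_I^2$ with $|I|=s-c+1$.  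For the latter, observe that whenever $I'\ne I$ are distinct $(s-c+1)$-subsets one has $|I\cup I'|\ge s-c+2$, so $L_{I'}L_I$ is divisible by a generator of $I_{V_{c-1}}$.  Consequently $F\cdot L_I = \sum_{I'} L_{I'}L_I \equiv L_I^2 \pmod{I_{V_{c-1}}}$, which exhibits $L_I^2\in I_{C'}$ and completes the reverse inclusion.

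Once $C' = V_c^{(2)}$ is identified, part~(1) follows by plugging the $h$-vectors of $V_c$ and $V_{c-1}$ from Proposition~\ref{basic facts}(2) into the identity $h_{C'}(t) = h_{V_{c-1}}(t) - h_{V_{c-1}}(t-d) + h_{V_c}(t-d)$ of Proposition~\ref{bdgl} and taking the $(n-c+1)$-st difference; this reduces to a routine telescoping binomial sum that gives the three-piece formula.  For part~(2) I would form the mapping cone of the short exact sequence above using the Eagon--Northcott resolutions $\mathbb E^{s,c}_\bullet$ and $\mathbb E^{s,c-1}_\bullet$ of Remark~\ref{GenericPerfectionRem}.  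The cone has the shape $\mathbb F_i = \mathbb E^{s,c}_i(-d) \oplus \mathbb E^{s,c-1}_{i-1}(-d) \oplus \mathbb E^{s,c-1}_i$ on the nose, and substituting (\ref{rk}) yields the stated $M_i$ and $N_i$.  Minimality of the cone is automatic: the two lifts making up the comparison map are (a) the lift of the inclusion $I_{V_{c-1}}\hookrightarrow I_{V_c}$, whose entries are linear forms because the generator degrees at each level differ by exactly one, and (b) multiplication by $F$, whose entries all have degree $d = s-c+1\ge 1$; both sets of entries therefore lie in $\mathfrak m$.

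The main obstacle I expect is the combinatorial content in the second paragraph: correctly classifying the minimal monomial generators of $I_{V_c}^{(2)}$ and then recognizing the near-miraculous collapse $F\cdot L_I\equiv L_I^2\pmod{I_{V_{c-1}}}$, which is exactly what allows a single form $F$ to cut out every square generator simultaneously.  The rest is bookkeeping with the Eagon--Northcott complex.
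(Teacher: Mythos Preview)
Your argument is correct and shares the paper's overall architecture: apply Proposition~\ref{bdgl} with $S=V_{c-1}$, $C=V_c$, and some $F\in (I_{V_c})_{s-c+1}$, identify the resulting $C'$ with $V_c^{(2)}$, and then read off the $h$-vector and the mapping-cone resolution.  Where you diverge is in the choice of $F$ and, correspondingly, in the proof of the key equality $I_{C'}=I_{V_c}^{(2)}$.  The paper takes $F$ \emph{general} and argues the equality by comparing degrees: $F$ cuts $V_{c-1}$ in a scheme supported on $V_c$ with multiplicity exactly $c$ along each component (because $(\deg F)(\deg V_{c-1})=c\binom{s}{c}$), so $C'$ has multiplicity $c+1$ along each component, matching $V_c^{(2)}$; the single inclusion \eqref{ideal of ss} then forces equality of unmixed saturated ideals.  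You instead pick the specific $F=\sum_{|I|=s-c+1}L_I$, classify the minimal $L$-monomial generators of $I_{V_c}^{(2)}$ as the squarefree $L_J$ with $|J|=s-c+2$ together with the squares $L_I^2$, and exhibit each inside $I_{C'}$ via the collapse $F\cdot L_I\equiv L_I^2\pmod{I_{V_{c-1}}}$.  The paper's route is shorter and never needs to know the generators of $I_{V_c}^{(2)}$; yours is more explicit, gives those generators as a byproduct, and in particular shows that a \emph{single, specific} $F$ (rather than a Zariski-general one) already works.  Both arguments finish the same way for parts~(1) and~(2), including the minimality of the mapping cone from the linearity of the Eagon--Northcott resolutions.
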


\begin{proof}
By Lemma \ref{bdgl} (4) applied to $V_{c-1}$, $I_{V_{c-1}}$ is entirely generated in 
degree $s-c+2$,  while $I_{V_c}$ is entirely generated in degree $s-c+1$.  Let 
$F \in I_{V_c}$ be a general element of degree $s-c+1$.  Then $F$ does not 
vanish on any component of $V_{c-1}$, i.e. it is a non-zerodivisor on $R/I_{V_{c-1}}$.

As mentioned above, $V_{c-1}$ is a union of codimension $c-1$ linear spaces 
and $V_c$ is its singular locus.  In particular, each component of $V_c$ is the 
intersection of $c$ of the hyperplanes $H_i$, so there pass $c$ components of 
$V_{c-1}$ through each component of $V_c$ (take away one $H_i$ at a time).  
Since $F \in I_{V_c}$ and $F$ does not vanish on any component of $V_{c-1}$, 
the subscheme of $V_{c-1}$ cut out by $F$ thus has multiplicity at least $c$ 
locally along each component of $V_c$.  This accounts for a subscheme of 
degree at least $c \cdot \binom{s}{c}$.  On the other hand, a quick calculation shows
\[
(\deg F) \cdot (\deg V_{c-1}) = (s-c+1) \cdot \binom{s}{c-1} = c \cdot \binom{s}{c}.
\]
We conclude that $F$ cuts out a subscheme supported on $V_c \subset V_{c-1}$ with 
multiplicity exactly $c$ along each component of $V_c$.  Consequently, thanks to  
Proposition \ref{bdgl}, the subscheme defined by the ideal $F \cdot I_{V_c} + I_{V_{c-1}}$ 
is supported on $V_c$ and has degree $c+1$ along each component.  

This is the same degree and support as the scheme defined by the symbolic square of $I_{V_c}$, and 
both $I_{V_c}^{(2)}$ and $F \cdot I_{V_c} + I_{V_{c-1}}$ are unmixed (in particular, saturated).  To show 
equality, then, we just have to show one inclusion.  We will show
\addtocounter{thm}{1}
\begin{equation} \label{ideal of ss}
F \cdot I_{V_c} + I_{V_{c-1}} \subseteq I_{V_c}^{(2)} .
\end{equation}
First, any element of $F \cdot I_{V_c}$ is an element of $I_{V_c}^{(2)}$ since $F \in I_{V_c}$. 
Furthermore, by Lemma \ref{contain} we have $I_{V_{c-1}} \subset I_{V_c}^{(2)}$, so the inclusion 
follows, and the ideals are equal.
We thus have a new proof that  $V_c^{(2)}$ is ACM.  

Now we can write the Hilbert function,  a minimal generating set 
and minimal free resolution using Proposition \ref{bdgl}.  Indeed,
observe that the claimed $h$-vector is actually
\[
\Delta^{n-c+1} h_{R/I_{V_c^{(2)}}}(t) = \left \{
\begin{array}{rcll}
\Delta^{n-c+1} h_{R/I_{V_{c-1}}}(t) & =  & \binom{t+c-1}{c-1} & \hbox{if $t \leq s-c$} \\ \\
 \Delta^{n-c+1} h_{R/I_{V_{c-1}}} (s-c+1) & = & \binom{s}{c-1} & \hbox{if $s-c+1 \leq t \leq 2s-2c+1$ } \\ \\
 0 &&& \hbox{if $t > 2s-2c+1$}
\end{array}
\right .
\]
The first two lines are immediate since (\ref{ideal of ss}) shows that $I_{V_{c-1}}$ and $I_{V_c}^{(2)}$ 
agree through degree $(s-c+1) + (s-c) = 2s-2c+1$  (since Proposition \ref{basic facts} gives the initial 
degree of $I_{V_c}$ as $s-c+1$).
The third line comes from the fact that 
\[
\Delta h^{n-c+1} h_{R/I_{V_c^{(2)}}}(t) = 
[h^{n-c+1}_{R/I_{V_{c-1}}}(t) - h^{n-c+1}_{R/I_{V_{c-1}}}(t-(s-c+1))] + h^{n-c+1}_{R/I_{V_{c}}}(t-(s-c+1))  .
\]
Now, thanks to Proposition \ref{basic facts}, the third term is zero in 
degree $(s-c+1) + (s-c+1) = 2s-2c+2$.  As for the first and second 
terms, they agree in degrees $\geq (s-c+1) + (s-c+1)$, so their difference is zero in this range.

It remains to find the minimal free resolution of $I_{V_c}^{(2)}$. From Proposition \ref{bdgl} and the 
above calculations, we have the short exact sequence
\[
0 \rightarrow I_{V_{c-1}}(-1+c-s) \rightarrow I_{V_c}(-1+c-s) \oplus I_{V_{c-1}} \rightarrow I_{V_c}^{(2)}
\rightarrow 0.
\]
The minimal free resolutions of $I_{V_{c-1}}$ and of $I_{V_c}$ are described in Remark 
\ref{GenericPerfectionRem}, and in particular the equation (\ref{rk}).  A mapping cone then gives a free 
resolution of $I_{V_c}^{(2)}$, and since the resolutions of $I_{V_{c-1}}$ and of $I_{V_c}$ are linear,  it is 
immediate that there is no splitting, so this is in fact a minimal free resolution.
\end{proof}

\begin{example}
Let $n=4$, $s = 7$ and $c=3$.  The $h$-vectors of $R/I_{V_2}$ and $R/I_{V_3}$ are
\[
(1,2,3,4,5,6) \ \ \ \hbox{and} \ \ \ (1,3,6,10,15),
\]
respectively.  Let $F \in (I_{V_3})_5$.  The $h$-vector of $R/(F,I_{V_2})$ is
\[
(1,3,6,10,15,20,18,15,11,6)
\]
so using Proposition \ref{bdgl} and Proposition \ref{ss is acm},  we can compute the $h$-vector of 
$R/I_{V_3}^{(2)}$ as follows:

\begin{center}

\begin{tabular}{cccccccccccccccccccccc}
1 & 3 & 6 & 10 & 15 & 20 & 18 & 15 & 11 & 6 \\
&&&&& 1 & 3 & 6 & 10 & 15 \\ \hline
1 & 3 & 6 & 10 & 15 & 21 & 21 & 21 & 21 & 21
\end{tabular}

\end{center}

Let us now compute the minimal free resolution of $I_{V_3}^{(2)}$.  As before,  $I_{V_3}^{(2)} = 
F \cdot I_{V_3} + I_{V_2}$ and we have a short exact sequence
\[
0 \rightarrow I_{V_2}(-5) \rightarrow I_{V_3}(-5) \oplus I_{V_2} \rightarrow I_{V_3}^{(2)} \rightarrow 0.
\]
Now, because the artinian reduction of $R/I_{V_2}$ and of $R/I_{V_3}$ have generic Hilbert function, 
we know the graded Betti numbers.  Hence we have a diagram
\[
\begin{array}{ccccccccccccccccc}

&&&&& 0 \\
&& &&& \downarrow \\
&& 0 && R(-12)^{15}  & \oplus & 0  \\
&& \downarrow &&& \downarrow \\
&& R(-12)^6 && R(-11)^{35} & \oplus & R(-7)^6 \\
&& \downarrow &&& \downarrow \\
&& R(-11)^7 && R(-10)^{21} & \oplus & R(-6)^7 \\
&& \downarrow &&& \downarrow \\
0 & \rightarrow & I_{V_2}(-5) & \rightarrow & I_{V_3}(-5) & \oplus 
& I_{V_2} & \rightarrow & I_{V_3}^{(2)} & \rightarrow & 0 \\

\end{array}
\]
There is no possible splitting, so the minimal free resolution of $I_{V_3}^{(2)}$ is
\[
0 \rightarrow R(-12)^{21} \rightarrow 
\begin{array}{c}
R(-7)^6 \\
\oplus \\
R(-11)^{42}
\end{array}
\rightarrow
\begin{array}{c}
R(-6)^7 \\
\oplus \\
R(-10)^{21}
\end{array}
\rightarrow I_{V_3}^{(2)} \rightarrow 0.
\]
\end{example}

We now will consider the case of codimension 2.
In preparation for stating our results, we define some matrices.
Consider a set $\HH$ of $s>n$ hyperplanes $H_i\subset \pr n$ meeting properly,
so $V_2(\HH,\pr n)$ is the union of the $\binom{s}{2}$ codimension 2 linear spaces
of the form $H_i\cap H_j$ for $i\ne j$. Let $h_i$ be the linear
form defining $H_i$. Let $P=h_1\cdots h_s$, and let
$P_i=P/h_i$. 
Let $A_{m,n}$ be the $m\times n$ 0-matrix and
$\delta(d_1,\ldots,d_r)$ the $r\times r$ diagonal
matrix with diagonal entries $d_i$.  Furthermore, consider the $1 \times s$ matrix  $B$,
the $s\times s$ matrices $C$ and $E$ and the $s\times (s-1)$ matrix $D$, defined as follows:
\[
\begin{array}{rcl}
B & = & (-P_1\ -P_2\ -P_3\ \cdots\ -P_s), \\ \\
C & = & \delta(h_1,\ldots,h_s), \\ \\
D & = & 
\begin{pmatrix}
-h_1   & 0          &  0      &\cdots & 0 & 0\\
h_2   & -h_2          &  0      &\cdots & 0 & 0\\
0        &   h_3   &  -h_3      & \cdots & 0 & 0\\
       &     &     \cdots &  & & \\
0        &   0   &  0      & \cdots & h_{s-1} & -h_{s-1}\\
0        &   0   &  0      & \cdots & 0 & h_s
\end{pmatrix}, \text{ and}\\ \\
E & = & -\delta(P_1,\ldots,P_s).
\end{array}
\]
Finally, 
\begin{itemize}
\item when $m=2r$ is even, let $\Delta_m$ be the $(sr+1)\times sr$ matrix
\[
\Delta_m=\begin{pmatrix}
B  &           A_{1,s} & A_{1,s} & A_{1,s} & \cdots  & A_{1,s} & A_{1,s} \\
C &           E           & A_{s,s} & A_{s,s} & \cdots & A_{s,s} & A_{s,s} \\
A_{s,s} & C           & E           & A_{s,s}  & \cdots & A_{s,s}  & A_{s,s} \\
              &               &               &               & \cdots &                &               \\
A_{s,s} & A_{s,s} & A_{s,s} & A_{s,s}   & \cdots & C            & E \\
A_{s,s} & A_{s,s} & A_{s,s} & A_{s,s}   & \cdots & A_{s,s} & C 
\end{pmatrix}
\]

\item  when $m=2r+1$ is odd, let $\Delta_m$ be the $s(r+1)\times s(r+1)-1$ matrix:
\[
\begin{pmatrix}
D              & E           & A_{s,s} & A_{s,s}  & A_{s,s}  &\cdots & A_{s,s}  & A_{s,s} \\
A_{s,s-1}    & C           & E           & A_{s,s}  & A_{s,s}  &\cdots & A_{s,s} & A_{s,s}  \\
A_{s,s-1}    & A_{s,s} & C           & E           & A_{s,s}  & \cdots & A_{s,s} & A_{s,s} \\
                 &               &               &  \cdots &                &             &               & \\
A_{s,s-1}    & A_{s,s} & A_{s,s} & A_{s,s} &  A_{s,s}  & \cdots & C          & E\\
A_{s,s-1}    & A_{s,s} & A_{s,s} & A_{s,s} &  A_{s,s}  & \cdots & A_{s,s} & C
\end{pmatrix}
\]
\end{itemize}

\begin{lem}\label{maxminors}
The maximal minors of $\Delta_m$ are
$\{P^r\}$, $\{P^{r-1}P_i^2\}_{i=1}^s$, $\{P^{r-2}P_i^4\}_{i=1}^s,\ldots,\{P_i^{2r}\}_{i=1}^s$ if $m$ 
is even and $\{P^{r}P_i\}_{i=1}^s$, $\{P^{r-1}P_i^3\}_{i=1}^s,\dots, 
\{P_i^{2r+1}\}_{i=1}^s$ if $m$ is odd.
\end{lem}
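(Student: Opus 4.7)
The plan is to compute each maximal minor of $\Delta_m$ directly, exploiting the fact that every column of $\Delta_m$ contains at most two nonzero entries. Each column of $D$ has entries $-h_k$ and $h_{k+1}$, and each column coming from a $C$-block has an $h_\ell$ entry (from $C$) together with a $-P_\ell$ entry (from either $B$ or the $E$-block immediately above). Consequently, the Leibniz expansion of any maximal minor collapses, up to sign, to a signed sum over perfect matchings in the bipartite graph $G$ whose vertices are the rows and columns of $\Delta_m$ and whose edges record the nonzero entries. For each $\ell\in\{1,\dots,s\}$, the columns at strand-position $\ell$ together with the corresponding rows $(k,\ell)$ form a path in $G$, the \emph{$\ell$-strand}. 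In the even case $m=2r$, all $s$ strands share the single extra row from $B$, so $G$ is a bouquet; in the odd case $m=2r+1$, the $s$ top rows $(1,\ell)$ are linked by the $s-1$ columns of $D$ into a handle, so $G$ is a comb.

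The next step is to show that, for each row $R$, the graph $G\setminus R$ admits a unique perfect matching of its columns. A counting check yields exactly $1+sr$ perfect matchings of the columns of $G$ in the even case and $s(r+1)$ in the odd case, so each matching leaves a distinct row unmatched, giving a bijection between rows and matchings. The combinatorial heart is a handle-matching lemma: the $D$-path of $s-1$ columns, adjacent to the $s$ top rows in a ladder pattern, admits a perfect matching into a given $(s-1)$-subset of top rows if and only if that subset omits exactly one row $(1,i_0)$, and in that case the matching is forced to be $(D,k)\mapsto(1,k)$ for $k<i_0$ and $(D,k)\mapsto(1,k+1)$ for $k\geq i_0$. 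The analogue in the even case is that the $B$-row is forced to match column $(1,i)$ whenever the deleted row lies inside the $i$-strand.

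Finally I would compute the weight of the unique matching for each deletion. In the even case, removing the $B$-row forces each strand to descend via $C$, giving weight $\prod_\ell h_\ell^r=P^r$; removing $(j,i)$ with $j\geq 1$ forces the $B$-row to match column $(1,i)$, splits the $i$-strand so its first $j$ columns contribute $-P_i$ each (via $B$ or $E$) and its last $r-j$ columns contribute $h_i$ each (via $C$), while every other strand contributes $h_\ell^r$. The product $(-P_i)^j h_i^{r-j}\prod_{\ell\neq i}h_\ell^r$ collapses via $h_iP_i=P$ to $\pm P^{r-j}P_i^{2j}$. In the odd case, removing $(j_0,i_0)$ gives $i_0$-strand weight $(-P_{i_0})^{j_0-1}h_{i_0}^{r+1-j_0}$ (or $h_{i_0}^r$ when $j_0=1$), each other strand contributes $h_\ell^r$, and the $D$-handle contributes $(-1)^{i_0-1}P_{i_0}$; the product collapses to $\pm P^{r+1-j_0}P_{i_0}^{2j_0-1}$, yielding $\pm P^{r-k}P_{i_0}^{2k+1}$ for $k=j_0-1\in\{0,1,\dots,r\}$.

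The main obstacle is verifying the handle-matching lemma in the odd case: one must check that any attempt to alternate between ``col matches row above'' and ``col matches row below'' along the $D$-path either creates a conflict (when ``above'' is followed by ``below'') or leaves an extra unmatched row (when ``below'' is followed by ``above''). Hence the matching pattern must consist of a single descending run followed by a single ascending run, leaving exactly one top row free. This rigidity is what makes the overall matching unique and forces the explicit forms given by the lemma.
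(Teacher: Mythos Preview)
Your proposal is correct. The paper's own proof is essentially a one-line sketch—``the matrix $\Delta_m$ is close to being upper triangular, so the maximal minors are easy to compute with in some cases a few row and column swaps. We leave the details to the reader''—so you have in effect supplied the details the authors omitted.

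Your bipartite-matching framework and the paper's ``row/column swaps to upper triangular'' are two ways of expressing the same underlying observation: every column of $\Delta_m$ has exactly two nonzero entries, so after deleting a row, the Leibniz expansion of the resulting square matrix has at most one surviving term. Where the paper would presumably permute rows and columns so that the unique nonzero transversal lands on the diagonal, you instead trace that transversal directly through the strand/handle structure of the incidence graph. Your approach has the advantage of making the combinatorics explicit and handling all deletions uniformly, including the odd case where the $D$-block makes a naive triangularization less obvious; the handle-matching lemma is exactly the content needed there, and your verification of it is correct. One small expository point: the ``counting check'' you mention (that the number of perfect matchings equals the number of rows) is really a \emph{consequence} of the uniqueness argument you give afterward, not an independent input, so you might reorder that paragraph to avoid the appearance of circularity. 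Also, since the lemma only asserts what the minors are as elements generating an ideal, your treatment of signs (carrying $\pm$ throughout) is adequate; you need not track the permutation sign precisely.
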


\begin{proof}
The matrix $\Delta_m$ is close to being upper triangular, so the maximal minors are easy to compute with
in some cases a few row and column swaps. We leave the details to the reader.
\end{proof}

\begin{thm}\label{refBDL}
Let $\HH$ be a set of $s>n$ hyperplanes $H_i\subset \pr n$ meeting properly,
where $h_i$ is the linear form defining $H_i$. Let $I=I_{V_2(\HH,\pr n)}$, the ideal of the codimension 2 skeleton $V_2(\HH,\pr n)$.
 The Hilbert-Burch matrix for $I^{(m)}$ is
$\Delta_m$ and the generators for $I^{(m)}$ are as given in Lemma \ref{maxminors}.

\end{thm}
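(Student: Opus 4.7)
The strategy is to apply the Hilbert-Burch theorem once we identify the minimal monomial generators of $I^{(m)}$ and match them with the maximal minors of $\Delta_m$ computed in Lemma \ref{maxminors}.

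First, I would enumerate the minimal generators of $I^{(m)}$. By Theorem \ref{sym pwrs acm}, $I^{(m)}$ is generated by monomials in the linear forms $h_1,\dots,h_s$, and a monomial $h_1^{a_1}\cdots h_s^{a_s}$ lies in $I^{(m)}=\bigcap_{i<j}(h_i,h_j)^m$ if and only if $a_i+a_j\geq m$ for every pair $i\neq j$. This forces at most one coordinate $a_{i_0}$ to be strictly less than $\lceil m/2\rceil$; given the value of $a_{i_0}$, minimality pins the remaining $a_j$'s to $m-a_{i_0}$. Writing $a_{i_0}=r-k$ with $0\leq k\leq r$, the minimal generators work out to $P^{r-k}P_{i_0}^{2k}$ in the even case $m=2r$ (where $k=0$ yields $P^r$ independently of $i_0$, and $k\geq 1$ contributes $sr$ further generators), and to $P^{r-k}P_{i_0}^{2k+1}$ in the odd case $m=2r+1$ (for $0\leq k\leq r$ and $i_0=1,\dots,s$). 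The resulting totals, $sr+1$ and $s(r+1)$, exactly match the number of rows of $\Delta_m$. (In the odd case one also verifies that the a priori candidate $P^{r+1}$ is divisible by each $P^{r}P_{i_0}$ and so is not minimal.) Comparison with Lemma \ref{maxminors} then identifies these minimal generators with the maximal minors of $\Delta_m$ up to sign.

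Since $I^{(m)}$ is ACM of codimension $2$ by Theorem \ref{sym pwrs acm}, it is a perfect ideal of grade $2$, so the Hilbert-Burch theorem yields an exact complex
\[
0\longrightarrow R^{N-1}\xrightarrow{\ \Delta_m\ }R^{N}\longrightarrow I^{(m)}\longrightarrow 0,
\]
where $N$ is the number of minimal generators enumerated above. The rightmost map sends the $i$-th basis vector to the $i$-th signed maximal minor of $\Delta_m$, and the columns of $\Delta_m$ lie in its kernel automatically by Laplace expansion of the cofactor vector. Minimality is immediate because every nonzero entry of $\Delta_m$ is homogeneous of positive degree (either $\pm h_i$ or $\pm P_i$), so all entries lie in the irrelevant maximal ideal. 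This exhibits $\Delta_m$ as the Hilbert-Burch matrix of $I^{(m)}$.

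The main content is really the combinatorial identification of the minimal monomial generators of $I^{(m)}$, which is short given Theorem \ref{sym pwrs acm}: it reduces to the elementary observation that at most one coordinate of an exponent tuple satisfying $a_i+a_j\geq m$ pairwise can be below the half-way mark. The only other input, the explicit evaluation of the maximal minors of $\Delta_m$, is the content of Lemma \ref{maxminors} and exploits that $\Delta_m$ is nearly upper triangular. Neither step is a serious obstacle; the proof is essentially a combinatorial unpacking followed by a clean application of Hilbert-Burch.
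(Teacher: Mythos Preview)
Your argument is correct, but it reaches the identification $I^{(m)}=(\text{ideal of maximal minors of }\Delta_m)$ by a different route than the paper. You work from the $I^{(m)}$ side: citing Theorem~\ref{sym pwrs acm} for the fact that $I^{(m)}$ is generated by monomials in the $h_i$, you characterize membership of such a monomial by the pairwise condition $a_i+a_j\ge m$ (valid because the order of vanishing of $\prod_l h_l^{a_l}$ along $H_i\cap H_j$ is exactly $a_i+a_j$, the other $h_l$ being units locally), and then read off the divisibility-minimal exponent tuples, matching them to Lemma~\ref{maxminors}. The paper works from the matrix side: it lets $J$ be the ideal of maximal minors, observes that the minors have no common factor and cut out $V_2$, applies Hilbert--Burch to conclude $J$ is unmixed of codimension~$2$, and then checks $J=I^{(m)}$ by localizing at each prime $(h_i,h_j)$ and comparing primary components. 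The trade-off is that your proof leans on Theorem~\ref{sym pwrs acm} (the monomial-generation statement is the essential input; your appeal to ACM is actually dispensable, since grade~$2$ follows already from codimension~$2$), whereas the paper's proof is self-contained and in fact yields an independent proof that $I^{(m)}$ is ACM in the codimension-$2$ case. Both arrive at the same Hilbert--Burch conclusion; yours is a clean combinatorial shortcut once Theorem~\ref{sym pwrs acm} is in hand.
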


\begin{proof}
Let $J$ be the ideal generated by the elements listed in Lemma \ref{maxminors}.
It is easy to see that they have no common divisor and the zero-locus is
$V_2(\HH,\pr n)$. Thus by the Hilbert-Burch Theorem, $J$ defines an ACM subscheme
and the primary decomposition of $J$ consists of ideals primary for the ideals of the
components of $V_2(\HH,\pr n)$. The prime ideals corresponding to irreducible 
components of $V_2(\HH,\pr n)$ are precisely the ideals of the form\
$(h_i,h_j)$, $i\neq j$. If one localizes by inverting all $h_l$ with $l\not\in\{i,j\}$,
it is easy to check by an explicit examination of the generators given in
Lemma \ref{maxminors} that the localization $J'$ of the ideal $J$ equals the localization
of $(h_i,h_j)^m$. Thus $J$ and $I^{(m)}$ have the same primary decompositions,
so $J=I^{(m)}$, which concludes the proof.
\end{proof}

\bigskip

In the case of the codimension 2 skeleton, we now give yet another proof 
that the symbolic powers are Cohen-Macaulay, with an eye, again, to proving more than can be concluded from Theorem~\ref{sym pwrs acm}.  In fact, we will show that ideals which are ``almost" symbolic powers are also Cohen-Macaulay.

\begin{cor} \label{mixed mult codim 2}
Let $\HH$ be a set of $s>n$ hyperplanes $H_i\subset \pr n$ meeting properly,
where $h_i$ is a linear form defining $H_i$. Let $I=I_{V_2(\HH,\pr n)}$, the ideal of the codimension 2 skeleton $V_2(\HH,\pr n)$.  For $1\leq k\leq s$ and $\ell\geq 1$ arbitrary,
the schemes $W_k$ defined by the saturated ideals
\[
I_{W_k} = 
\bigcap_{1 \leq i <j \leq k} (L_i,L_j)^{\ell +2} \cap 
\bigcap_{1 \leq i \leq k  < j \leq s}  (L_i, L_j)^{\ell+1} \cap \bigcap_{k < i < j \leq s} (L_i,L_j)^\ell
\]
are all ACM.
\end{cor}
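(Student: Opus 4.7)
The plan is to prove the result by induction on $k$, using Basic Double G-Linkage (Proposition~\ref{bdgl}) to construct $W_k$ from $W_{k-1}$. Under the convention $W_0 = V_2^{(\ell)}$, the base case is immediate from Theorem~\ref{sym pwrs acm}. The observation driving the inductive step is that the multiplicities of $W_k$ and $W_{k-1}$ at a codimension~$2$ prime $(L_i,L_j)$ agree unless $k \in \{i,j\}$, in which case $W_k$ has multiplicity one higher than $W_{k-1}$ (namely, $\ell+1\mapsto\ell+2$ for $j<k$ and $\ell\mapsto\ell+1$ for $j>k$).

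For the inductive step, I would apply Proposition~\ref{bdgl} with $C = W_{k-1}$, $F = L_k$, and $S$ the hypersurface defined by the principal ideal
\[
I_S \;=\; \bigl(\, L_1^{\ell+2}\cdots L_{k-1}^{\ell+2}\cdot L_{k+1}^{\ell+1}\cdots L_s^{\ell+1}\,\bigr).
\]
The hypotheses of Proposition~\ref{bdgl} are straightforward to check: $S$ is ACM of codimension one since $I_S$ is principal; $L_k$ is a non-zerodivisor on $R/I_S$ because it is coprime to the generator of $I_S$; and $I_S \subseteq I_{W_{k-1}}$ follows from a routine local check at each codimension~$2$ prime, comparing the order of the generator of $I_S$ against the multiplicity of $W_{k-1}$ at that prime. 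Proposition~\ref{bdgl} then delivers an ACM subscheme $C'$ with saturated ideal $I_{C'} = L_k\cdot I_{W_{k-1}} + I_S$, so it remains to identify $C'$ with $W_k$.

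Since $I_{C'}$ and $I_{W_k}$ are both saturated and equidimensional of codimension~$2$, equality reduces to matching their localizations at each codimension~$2$ prime $(L_i,L_j)$. At primes with $k \notin \{i,j\}$ the form $L_k$ is a local unit and $I_S$ contributes inside the required power, so the localization of $I_{C'}$ coincides with that of $I_{W_{k-1}}$, which equals that of $I_{W_k}$ at such primes. At a prime $(L_k,L_j)$, letting $m$ be the multiplicity of $W_{k-1}$ there, the local ideal $L_k\cdot(L_k,L_j)^m$ is generated by $L_k^b L_j^{m+1-b}$ for $1\leq b\leq m+1$, so it recovers every generator of $(L_k,L_j)^{m+1}$ except $L_j^{m+1}$; the exponent of $L_j$ in the generator of $I_S$ equals exactly $m+1$, so $I_S$ supplies (up to a local unit) the missing generator and $I_{C'}$ localizes to $(L_k,L_j)^{m+1}$, the correct multiplicity of $W_k$.

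The main obstacle is balancing the two opposing constraints on the exponents $b_j$ appearing in $I_S$: containment $I_S \subseteq I_{W_{k-1}}$ forces $b_j$ to be at least the multiplicity of $W_{k-1}$ at $(L_k,L_j)$, while supplying the missing local generator $L_j^{m+1}$ forces $b_j \leq m+1$. Equality pins down $b_j = \ell+2$ for $j<k$ and $b_j = \ell+1$ for $j>k$, matching the definition of $I_S$ above. Once this local identification is in place one has $I_{C'} = I_{W_k}$, and the ACM-preservation clause of Proposition~\ref{bdgl} then yields that $W_k$ is ACM, completing the induction.
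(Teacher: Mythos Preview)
Your proof is correct and follows essentially the same route as the paper: both argue by induction on $k$, applying Proposition~\ref{bdgl} with $C=W_{k-1}$, $F=L_k$, and $S$ the hypersurface defined by the monomial $L_1^{\ell+2}\cdots L_{k-1}^{\ell+2}L_{k+1}^{\ell+1}\cdots L_s^{\ell+1}$. The only differences are cosmetic: you verify $I_{C'}=I_{W_k}$ by localizing at each codimension~$2$ prime, while the paper checks one inclusion globally and compares degrees; and you invoke Theorem~\ref{sym pwrs acm} for the base case $W_0=V_2^{(\ell)}$, whereas the paper deliberately avoids it (starting instead from $\ell=1,2$ and stepping $\ell\mapsto\ell+2$) so that the argument doubles as an independent proof that $V_2^{(\ell)}$ is ACM.
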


\begin{proof}
This will be a byproduct of a new proof of the Cohen-Macaulayness of the symbolic powers.  This proof is inspired by a construction used in \cite{MN7}.  That paper 
studied  {\em tetrahedral curves}, i.e.\ subschemes of $\mathbb P^3$  
defined by the intersection of powers of the ideals of the six components 
of $V_2$.  The specialization of the current theorem to $V_2 \subset \mathbb P^3$ 
was proved in that paper as a special case.

Note that we have already shown this result for $V_2^{(1)}$ and $V_2^{(2)}$.  
The idea of our proof, which worked also in \cite{MN7}, is that we can apply an 
inductive argument,  passing from $I_{V_2}^{(\ell)}$ to $I_{V_2}^{(\ell +2)}$ 
by a sequence of applications of Proposition \ref{bdgl}, thus ensuring that each resulting 
scheme along the way is ACM.  In particular, $V_2^{(\ell +2)}$ is ACM, and we have our result.

Recall that we have hyperplanes $H_1,\dots, H_s$ defined by linear forms 
$L_1,\dots, L_s$.  We begin with the ideal $I_{V_2}^{(\ell)}$.  Clearly we 
have $L_2^{\ell+1} \cdots L_s^{\ell+1} \in I_{V_2}^{(\ell)}$. 
We first claim that we have an equality of saturated ideals
\[
L_1 \cdot I_{V_2}^{(\ell)} + (L_2^{\ell +1} \cdots L_s^{\ell +1}) = 
\bigcap_{2 \leq j \leq s} (L_1,L_j)^{\ell +1} \cap \bigcap_{2 \leq i < j \leq s} (L_i,L_j)^\ell.
\]
 To see this, note first that both ideals are automatically saturated and 
 unmixed (the first comes from Proposition \ref{bdgl} and the second is an 
 intersection of saturated, unmixed ideals of the same height).  Hence 
 as before, we check that they define schemes of the same degree and 
 that there is an inclusion of one into the other.  The first ideal defines a scheme of degree 
\[
\left [ \deg (V_2^{(\ell)}) \right ] + \deg (L_1) \cdot  \left [ \deg (L_2^{\ell+1} \cdots L_s^{\ell+1}) \right ] = 
\left [ \binom{s}{2} \cdot \binom{\ell +1}{2} \right ] + (1) \cdot [(s-1)(\ell+1)]
\]
thanks to Proposition \ref{bdgl}.  The ideal on the right defines a scheme of degree 
\[
(s-1) \cdot \binom{\ell+2}{2} + \left [ \binom{s}{2} - (s-1) \right ] \cdot \binom{\ell+1}{2}  .
\]
We leave it to the reader to verify that these degrees are equal.  
Since the inclusion $\subseteq$ is clear, the claim is established.  
Note that by induction we may assume that $V_2^{(\ell)}$ is ACM, 
so the scheme defined by this new ideal is also ACM, thanks to the construction of 
Proposition \ref{bdgl}.

We now show that we can construct a sequence of ACM schemes
\[
V_2^{(\ell)} \subset W_1 \subset W_2 \subset \cdots \subset W_s = V_2^{(\ell+2)}
\]
by sequentially applying Proposition \ref{bdgl}.  What we have shown so far  
is that the scheme $W_1$ defined by the ideal
\[
L_1 \cdot I_{V_2}^{(\ell)} + (L_2^{\ell +1} \cdots L_s^{\ell +1}) = 
\bigcap_{2 \leq j \leq s} (L_1,L_j)^{\ell +1} \cap \bigcap_{2 \leq i < j \leq s} (L_i,L_j)^\ell
\]
is ACM and contains $V_2^{(\ell)}$.  Notice that $I_{W_1}$ contains the 
element $L_1^{\ell+2}L_3^{\ell+1} \cdots L_s^{\ell+1}$.

We now turn to the inductive step.  Suppose we have constructed the ACM 
scheme $W_k$ defined by the saturated ideal 
\[
I_{W_k} =
\Big(\bigcap_{1 \leq i <j \leq k} (L_i,L_j)^{\ell +2}\Big) \cap 
\Big(\bigcap_{1 \leq i \leq k  < j \leq s}  (L_i, L_j)^{\ell+1}\Big) \cap 
\Big(\bigcap_{k < i < j \leq s} (L_i,L_j)^\ell\Big)
\]
and that this ideal contains the element 
$L_1^{\ell+2}\cdots L_{k}^{\ell+2} L_{k+2}^{\ell+1} \cdots L_s^{\ell+1}$.  Notice that 
\[
\deg W_k = \binom{k}{2} \binom{\ell+3}{2} + (k)(s-k) \binom{\ell+2}{2} + \binom{s-k}{2} \binom{\ell+1}{2}.
\]
We produce the ACM scheme $W_{k+1}$ via the ideal
\[
L_{k+1} \cdot I_{W_k} + (L_1^{\ell+2}\cdots L_{k}^{\ell+2} L_{k+2}^{\ell+1} \cdots L_s^{\ell+1}).
\]
Notice that thanks to Proposition \ref{bdgl}, its degree is
\[
\deg W_k + (k)(\ell+2) + (s-k-1)(\ell+1).
\]
To prove that this  ideal is equal to
\[
I_{W_{k+1}} = 
\bigcap_{1 \leq i <j \leq k+1} (L_i,L_j)^{\ell +2} \cap 
\bigcap_{1 \leq i \leq k+1  < j \leq s}  (L_i, L_j)^{\ell+1} \cap \bigcap_{k +1< i < j \leq s} (L_i,L_j)^\ell
\]
is an elementary computation along exactly the same lines as above 
(although showing that the degrees are equal is very tedious).  
It is not hard to check that this ideal 
contains the element $L_1^{\ell+2} \cdots L_{k+1}^{\ell+2} L_{k+3}^{\ell+1} \cdots L_s^{\ell+1}$.  
Thus the inductive step works, and after $s$ steps we obtain $W_s = V_2^{(\ell+2)}$.
\end{proof}

We remark that in the case $n=3$, $s=4$ (the tetrahedral curve case), the study of when the ideals defined by 
\[
(x_1,x_2)^{\alpha_1} \cap (x_1,x_3)^{\alpha_2} \cap (x_1,x_4)^{\alpha_3} \cap 
(x_2,x_3)^{\alpha_4} \cap (x_2,x_4)^{\alpha_5} \cap (x_3,x_4)^{\alpha_6} 
\]
define ACM subschemes of $\mathbb P^3$ was begun in \cite{MN7} and 
completed in \cite{francisco}.  Corollary \ref{mixed mult codim 2} gives a 
partial extension to the codimension two case in $\pr n$.


\section{Primary decompositions of powers of ideals of star configurations and applications}

In this section we consider an important special case: star configurations 
defined by monomial ideals. Such a star configuration arises from the set of
$s=N+1$ coordinate hyperplanes in $\pr N$. As motivation, we note that given any 
codimension $c$ star configuration $V_c(\HH,\pr n)$ defined by a set $\HH= \{H_1,\dots,H_s\}$
of $s>n$ hyperplanes in $\pr n$, we have $V_c(\HH,\pr n)=V_c(\HH',\pr N)\cap L$
for an appropriate $n$-dimensional linear subspace $L\subset\pr N$, where $N+1=s$ and
$\HH'=\{H_0',\ldots,H_N'\}$ are the coordinate hyperplanes for $\pr N$. 
(In particular, define $\phi:k[\pr N]\to k[\pr n]$ by $\phi:x_i\mapsto L_{i+1}$ for $0\leq i\leq N$, where 
$x_i$ is the $i$th coordinate variable and $L_i$ is the linear form which defines $H_i$.
Then $L$ is defined by the kernel of $\phi$.)
In fact, by Theorem \ref{sym pwrs acm}, we also have 
$\phi(I_{V_c(\HH',\pr N)}^{(m)})=I_{V_c(\HH,\pr n)}^{(m)}$
for all $m\geq 1$.

We now make a conjecture on the primary decomposition of 
$I_{V_c(\HH,\pr n)}^l$, which we will verify in the monomial case
(i.e., for $I_{V_c(\HH',\pr N)}^l$; see Theorem \ref{monomialprdecomp}).

\begin{conj}\label{hopefulconj}
Let $s>n$ and let $\HH=\{H_1,\ldots,H_s\}$ be hyperplanes $H_i\subset\pr n$ meeting properly,
defined by linear forms $L_i$.
Let $M$ be the irrelevant ideal in $k[\pr n]$ and $M'$ the irrelevant ideal in $k[\pr N]$,
where $N+1=s$ with $k[\pr N]=k[x_0,\ldots,x_N]$ so $M'=(x_0,\ldots,x_N)$, and let $\HH'$ be 
the $N+1$ coordinate hyperplanes in $\pr N$.
Define $\phi:k[\pr N]\to k[\pr n]$ by $\phi:x_i\mapsto L_{i+1}$. Then 
\begin{equation*}
\begin{split}
I_{V_c(\HH,\pr n)}^l&= \phi(I_{V_c(\HH',\pr N)}^l)\\
&=\phi(I_{V_c(\HH',\pr N)}^{(l)}\cap I_{V_{c+1}(\HH',\pr N)}^{(2l)}\cap\cdots\cap I_{V_N(\HH',\pr N)}^{((N-c+1)l)}\cap (M')^{(N-c+2)l})\\
&=\phi(I_{V_c(\HH',\pr N)}^{(l)})\cap \phi(I_{V_{c+1}(\HH',\pr N)}^{(2l)})\cap\cdots\cap \phi(I_{V_N(\HH',\pr N)}^{((N-c+1)l)})\cap\phi((M')^{(N-c+2)l})\\
&= I_{V_c(\HH,\pr n)}^{(l)}\cap\cdots
\cap I_{V_n(\HH,\pr n)}^{((n-c+1)l)}\cap 
\big(\phi(I_{V_{n+1}(\HH',\pr N)}^{((n-c+2)l)})\cap
\cdots\cap \phi(I_{V_N(\HH',\pr N)}^{((N-c+1)l)})\cap \phi((M')^{(N-c+2)l})\big)\\
&= I_{V_c(\HH,\pr n)}^{(l)}\cap I_{V_{c+1}(\HH,\pr n)}^{(2l)}\cap\cdots
\cap I_{V_n(\HH,\pr n)}^{((n-c+1)l)}\cap M^{(N-c+2)l}.
\end{split}
\end{equation*}
\end{conj}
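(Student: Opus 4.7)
The plan is to verify Conjecture \ref{hopefulconj} by establishing its chain of five equalities in order, taking the combinatorial monomial case (Theorem \ref{monomialprdecomp}) as a given. The first equality $I_{V_c(\HH,\pr n)}^l = \phi(I_{V_c(\HH',\pr N)}^l)$ is immediate: Proposition \ref{basic facts}(4) shows that $I_{V_c(\HH',\pr N)}$ and $I_{V_c(\HH,\pr n)}$ are generated respectively by the products of $s-c+1$ distinct variables and the products of $s-c+1$ distinct linear forms, and $\phi$ matches these generating sets bijectively, giving $\phi(I_{V_c(\HH',\pr N)}) = I_{V_c(\HH,\pr n)}$; since $\phi$ is a ring homomorphism it commutes with ordinary powers. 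The second equality is then just Theorem \ref{monomialprdecomp} applied inside $\phi$.

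The third equality---$\phi$ commuting with the specified intersection---is the crux of the argument. A surjection of rings does not commute with intersections in general: writing $K = \ker\phi = (x_{n+1}-L_{n+1},\ldots,x_N-L_N)$, what is needed is $(A\cap B) + K = (A+K)\cap(B+K)$ for the ideals at hand. The natural strategy is to exploit the ACM properties established in Theorem \ref{sym pwrs acm}: the linear forms $M_i = x_i - L_i$ already form a regular sequence on each quotient $k[\pr N]/I_{V_j(\HH',\pr N)}^{(m)}$ (this is exactly what the proof of Theorem \ref{sym pwrs acm} uses), and one would try to upgrade this to a regular sequence on the quotient by the full intersection. If that intersection itself is Cohen--Macaulay, then the standard behavior of ACM schemes under hyperplane sections by non-zerodivisor linear forms yields the needed commutativity. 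A backup plan is a direct lifting argument: given $f$ in the conjectured intersection on the $\pr n$ side, construct a single lift $\tilde f\in k[\pr N]$ lying simultaneously in every factor on the $\pr N$ side via a Chinese-remainder-style inductive construction.

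The fourth equality peels off the factors with $c\leq j\leq n$: by the proof of Theorem \ref{sym pwrs acm}, $\phi(I_{V_j(\HH',\pr N)}^{(m)}) = I_{V_j(\HH,\pr n)}^{(m)}$ for each such $j$, so these low-codimension factors descend cleanly. The fifth equality then reduces to proving
\[
\bigcap_{j=n+1}^N \phi\bigl(I_{V_j(\HH',\pr N)}^{((j-c+1)l)}\bigr)\cap \phi\bigl((M')^{(N-c+2)l}\bigr) = M^{(N-c+2)l}.
\]
Here $\phi((M')^{(N-c+2)l}) = M^{(N-c+2)l}$ because the $L_i$ span the space of linear forms (any $n+1$ of them do, by proper meeting), so the content of the identity is that each $\phi(I_{V_j(\HH',\pr N)}^{((j-c+1)l)})$ with $j>n$ contains $M^{(N-c+2)l}$. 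The proper-meeting hypothesis forces any $n+1$ of the $L_i$ to span $M$, and a degree count using the explicit monomial generators of Theorem \ref{monomialprdecomp} should supply enough elements of $\phi(I_{V_j(\HH',\pr N)}^{((j-c+1)l)})$ to span $M^{(N-c+2)l}$ in every relevant degree.

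The main obstacle is clearly the third step. The ACM property of each individual symbolic power does not automatically propagate to ACM-ness of their intersection; indeed that intersection is the ordinary power $I_{V_c(\HH',\pr N)}^l$ by the monomial case, and ordinary powers of ACM ideals need not be ACM. This is presumably why the authors state the general case only as a conjecture while proving the monomial case ($n=N$) directly, in which no $\phi$-descent is required.
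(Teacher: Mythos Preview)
This statement is a \emph{conjecture} in the paper, not a theorem; the paper does not prove it in general. What the paper does (in Remark \ref{DiscOfConj}) is exactly the dissection you carried out: equalities 1, 2, and 4 hold unconditionally (by Proposition \ref{basic facts}(4), Theorem \ref{monomialprdecomp}, and Theorem \ref{sym pwrs acm} respectively), while equalities 3 and 5 are only known to be the containments $\subseteq$. So your identification of equality 3 as the crux, and your acknowledgement that the ACM strategy there does not go through because the intersection is the ordinary power $I^l$ (which need not be ACM), matches the paper's own assessment. There is no proof in the paper to compare yours to, because no proof exists.

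However, your treatment of the fifth equality contains a genuine error. You reduce it to showing that the tail
\[
\bigcap_{j=n+1}^N \phi\bigl(I_{V_j(\HH',\pr N)}^{((j-c+1)l)}\bigr)\cap \phi\bigl((M')^{(N-c+2)l}\bigr)
\]
equals $M^{(N-c+2)l}$, via the claim that each factor contains $M^{(N-c+2)l}$. The paper explicitly states (Remark \ref{DiscOfConj}) that this tail ``is not itself in general a pure power of $M$''. The actual content of the fifth equality is subtler: one may replace the tail by $M^{(N-c+2)l}$ \emph{inside the full intersection} with the lower-codimension symbolic powers $I_{V_c}^{(l)}\cap\cdots\cap I_{V_n}^{((n-c+1)l)}$ without changing that intersection, even though the tail and $M^{(N-c+2)l}$ differ as ideals. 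So your proposed degree-count argument is aimed at a false target, and equality 5 remains just as open as equality 3.
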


\begin{rem}\label{DiscOfConj}
Conjecture \ref{hopefulconj} is somewhat complicated so some comments may be helpful.
The point is to give primary decompositions of $I_{V_c(\HH,\pr n)}^l$
in terms of intersections of symbolic powers. Of course,
the symbolic powers are not primary, but they are by definition intersections of primary ideals;
for example, $I_{V_c(\HH,\pr n)}^{(l)}$ is the intersection of the $l$th powers of the
ideals defining the various linear components of $V_c(\HH,\pr n)$.
What is true is:
\addtocounter{thm}{1}
\begin{equation}\label{bigdisplay}
\begin{split}
I_{V_c(\HH,\pr n)}^l&= \phi(I_{V_c(\HH',\pr N)})^l=\phi(I_{V_c(\HH',\pr N)}^l)\\
&=\phi(I_{V_c(\HH',\pr N)}^{(l)}\cap I_{V_{c+1}(\HH',\pr N)}^{(2l)}\cap
\cdots\cap I_{V_N(\HH',\pr N)}^{((N-c+1)l)}\cap (M')^{(N-c+2)l})\\
&\subseteq\phi(I_{V_c(\HH',\pr N)}^{(l)})\cap \phi(I_{V_{c+1}(\HH',\pr N)}^{(2l)})\cap
\cdots\cap \phi(I_{V_N(\HH',\pr N)}^{((N-c+1)l)})\cap \phi((M')^{(N-c+2)l})\\
&= I_{V_c(\HH,\pr n)}^{(l)}\cap\cdots
\cap I_{V_n(\HH,\pr n)}^{((n-c+1)l)}\cap 
\big(\phi(I_{V_{n+1}(\HH',\pr N)}^{((n-c+2)l)})\cap
\cdots\cap \phi(I_{V_N(\HH',\pr N)}^{((N-c+1)l)})\cap \phi((M')^{(N-c+2)l})\big)\\
& \subseteq I_{V_c(\HH,\pr n)}^{(l)}\cap I_{V_{c+1}(\HH,\pr n)}^{(2l)}\cap\cdots
\cap I_{V_n(\HH,\pr n)}^{((n-c+1)l)}\cap M^{(N-c+2)l}.
\end{split}
\end{equation}
The first equality follows from Proposition \ref{basic facts}(4), the second 
since $\phi$ is a homomorphism and the third by Theorem \ref{monomialprdecomp} below.
The third line (i.e., the first inclusion) holds since the image of an intersection is always contained in
the intersection of the images (for any mapping), and the fourth line holds
since $\phi(I_{V_{c+i}(\HH',\pr N)}^{((i+1)l)})= I_{V_{c+i}(\HH,\pr n)}^{((i+1)l)}$ for each $i$
by Theorem \ref{sym pwrs acm}.
The fifth line holds since $\phi(M')=M$ and since some of the
terms in the intersection have been deleted.
Thus the conjecture is that the two inclusions are equalities. 

The conjecture that the first inclusion is an equality 
says that $\phi$ commutes with the intersections. 
Having equality would give a
primary decomposition of $I_{V_c(\HH,\pr n)}^l$. Note that the tail end of this
conjectured primary decomposition, namely
$$\phi(I_{V_{n+1}(\HH',\pr N)}^{((n-c+2)l)})\cap \phi(I_{V_{n+2}(\HH',\pr N)}^{((n-c+3)l)})
\cap\cdots\cap \phi(I_{V_N(\HH',\pr N)}^{((N-c+1)l)})\cap \phi((M')^{(N-c+2)l}),$$
is primary for the irrelevant ideal, $M$. The last line of the conjecture simply asserts that this 
irrelevant component, which is not itself in general a pure power of $M$, can 
nonetheless be replaced by a pure power of $M$. 

Finally, note that the primary decompositions proposed here need not be irredundant.
For example, when $l=1$, the last line of \eqref{bigdisplay} is contained in (hence equal to) 
$I_{V_c(\HH,\pr n)}$, hence Conjecture \ref{hopefulconj} holds for $l=1$,
but obviously the primary decomposition it gives is not minimal.
\end{rem}

\begin{rem}\label{CasesWhereConjHolds}
Here we note some cases where Conjecture \ref{hopefulconj} is known to hold. 
Conjecture \ref{hopefulconj} holds when $l=1$, as noted at the end of Remark \ref{DiscOfConj}. 
It is easy to see that Conjecture \ref{hopefulconj} holds when $c=1$, since 
$I_{V_c(\HH,\pr n)}$ is principal and $V_c(\HH,\pr n)$ is a complete intersection.
Conjecture \ref{hopefulconj} holds when $c=n$, since 
$(I_{V_c(\HH,\pr n)}^l)_t=(I_{V_c(\HH,\pr n)}^{(l)})_t$
for $t\geq \alpha(I_{V_c(\HH,\pr n)}^l)=l(s-c+1)$ by
\cite[Lemma 2.3.3(c), Lemma 2.4.2]{BH}, and hence
$I_{V_n(\HH,\pr n)}^l=I_{V_n(\HH,\pr n)}^{(l)}\cap M^{(N-c+2)l}$.
And Conjecture \ref{hopefulconj} holds when $n=N=s-1$,
by Theorem \ref{monomialprdecomp}.
\end{rem}

So now we begin a study of monomial star configurations $V_c^{(l)}(\HH',\pr N)$, where $\HH'$ 
consists of the $N+1$ coordinate hyperplanes.
Consider $k[\pr N]=k[x_0,\ldots,x_N]$. Let $p_0,\ldots,p_N$ be the coordinate vertices,
where $I_{p_i}=(\{x_j:j\neq i\})$. More generally, let $\Lambda=\langle p_{i_1},\ldots,p_{i_r}\rangle$
be the linear subspace spanned by the given points $p_{i_j}$; then 
$$I_\Lambda=(\{x_j:j\not\in \{i_1,\ldots,i_r\}\}).$$
Given any monomial $\mu=x_0^{m_0}\cdots x_N^{m_N}$, we can define its $\Lambda$-degree as
$\deg_\Lambda(\mu)=\sum_{j\not\in \{i_1,\ldots,i_r\}}m_j=\deg(\mu)-\sum_{j\in \{i_1,\ldots,i_r\}}m_j$.
Note that $\deg_\Lambda(\mu)$ is just the order of vanishing of $\mu$ on $\Lambda$
(i.e., the largest power of $I_\Lambda$ containing $\mu$). 
Let $V_c=V_c(\HH',\pr N)$ and let $I=I_{V_c}$ be its ideal. It now follows 
from the definition of symbolic power that $I^{(l)}$ is generated 
by all monomials $\mu$ such that $\deg_\Lambda(\mu)\geq l$
for all irreducible components $\Lambda$ of $V_c$. 

In the next result we determine $\alpha(I^{(l)})$. This is a special case extension of
the result $\alpha(I)=N-c+2$ given in Proposition \ref{basic facts}(4). We will use 
this extension in Corollary \ref{initdegsmbpower} to extend the determination of 
$\alpha$ given in Proposition \ref{basic facts}(4) to symbolic powers in general.

\begin{prop}\label{initdegmonomsmbpower}
Let $I$ be the ideal of $V_c=V_c(\HH',\pr N)$ where $\HH'$ consists
of the $N+1$ coordinate hyperplanes, and let $l\geq1$. Define $q$ and $r$ by writing 
$l=qc+r$ for $1\leq r\leq c$.
Then $\alpha(I^{(l)})=(q+1)(N+1)-c+r$. 
\end{prop}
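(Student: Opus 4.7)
The plan is to reduce the question to a combinatorial minimization over exponent vectors and then solve that optimization explicitly. First I would invoke the monomial characterization of $I^{(l)}$ given in the paragraph preceding the proposition: $I^{(l)}$ is a monomial ideal, generated by those monomials $\mu = x_0^{m_0}\cdots x_N^{m_N}$ satisfying $\deg_\Lambda(\mu) \geq l$ for every irreducible component $\Lambda$ of $V_c$. The components of $V_c$ are in bijection with $c$-element subsets $J \subset \{0,\ldots,N\}$: each such $J$ yields $\Lambda_J = \{x_j = 0 : j \in J\}$ with $\deg_{\Lambda_J}(\mu) = \sum_{j \in J} m_j$. Since $I^{(l)}$ is a monomial ideal, $\alpha(I^{(l)})$ equals the minimum of $\sum_j m_j$ over all tuples $(m_0,\ldots,m_N) \in \Z_{\geq 0}^{N+1}$ satisfying $\sum_{j \in J} m_j \geq l$ for every $c$-subset $J \subset \{0,\ldots,N\}$.

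Second, for the lower bound, I would observe that the system of constraints ``every $c$-subset sums to at least $l$'' collapses to the single inequality ``the $c$ smallest exponents sum to at least $l$''. Sorting the exponents as $m_{(0)} \leq m_{(1)} \leq \cdots \leq m_{(N)}$, the trivial bound $c \cdot m_{(c-1)} \geq m_{(0)} + \cdots + m_{(c-1)} \geq l$ forces $m_{(c-1)} \geq \lceil l/c \rceil = q+1$; here the convention $1 \leq r \leq c$ (rather than $0 \leq r < c$) is essential so that $\lceil l/c \rceil = q+1$ holds uniformly, including the case $r = c$. The remaining $N+1-c$ exponents are each at least $q+1$, yielding
\[
\sum_{j} m_j \;\geq\; l + (N+1-c)(q+1) \;=\; (q+1)(N+1) - c + r.
\]

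Third, for tightness I would exhibit an explicit monomial of this degree: take $\mu$ with $c-r$ exponents equal to $q$ and the remaining $N+1-c+r$ exponents equal to $q+1$ (here $c-r \geq 0$, with $r = c$ corresponding to all exponents equal to $q+1$). The $c$ smallest exponents consist of the $c-r$ copies of $q$ together with $r$ copies of $q+1$, summing to $(c-r)q + r(q+1) = qc + r = l$, so every $c$-subset sums to at least $l$ and $\mu \in I^{(l)}$. A direct count gives $\deg \mu = (c-r)q + (N+1-c+r)(q+1) = (q+1)(N+1) - c + r$, matching the lower bound, so the infimum is attained.

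I do not anticipate any substantive obstacle here: once the monomial characterization of $I^{(l)}$ is in hand, the argument is an elementary sorting-plus-pigeonhole optimization together with the exhibition of an extremal monomial. The only slightly delicate point is tracking the boundary case $r = c$ when computing $\lceil l/c \rceil$, but the proposition's stated convention $1 \leq r \leq c$ has been arranged precisely so that $\lceil l/c \rceil = q+1$ holds without case splits.
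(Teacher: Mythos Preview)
Your proposal is correct and follows essentially the same approach as the paper: both use the monomial characterization of $I^{(l)}$, construct the same extremal monomial for the upper bound (the paper's $(x_0\cdots x_N)^q x_0\cdots x_{N-c+r}$ is exactly your monomial with $c-r$ exponents equal to $q$ and $N+1-c+r$ equal to $q+1$), and handle the lower bound by a combinatorial argument on sorted exponent vectors. Your lower bound via the direct inequality $\sum_j m_j \geq (m_{(0)}+\cdots+m_{(c-1)}) + (N+1-c)m_{(c-1)} \geq l + (N+1-c)\lceil l/c\rceil$ is slightly more streamlined than the paper's, which instead shows that any monomial of degree $(q+1)(N+1)-c+r-1$ fails the membership criterion by comparing it to the most balanced monomial of that degree.
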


\begin{proof} Let $\mu=(x_0\cdots x_N)^qx_0\cdots x_{N-c+r}$. 
Every component $\Lambda$ of $V_c$ is the span of exactly
$N-c+1$ of the coordinate vertices $p_i$. Thus 
$\deg_\Lambda(x_0\cdots x_N)=N+1-(N-c+1)=c$ and
$\deg_\Lambda(x_0\cdots x_{N-c+r})\geq (N-c+r+1)-(N-c+1)=r$, so
$\deg_\Lambda(\mu)\geq qc+r=l$. Thus $\mu\in I^{(l)}$, so
$\alpha(I^{(l)})\leq \deg(\mu)=(q+1)(N+1)-c+r$. 

To show that $\alpha(I^{(l)})\geq(q+1)(N+1)-c+r$,
it is enough to show for each monomial of degree 
$(q+1)(N+1)-c+r-1$ that there is a component $\Lambda$ of $V_c$
on which the monomial has order of vanishing less than $l$.
So let $\mu=x_0^{m_0}\cdots x_N^{m_N}$ be any monomial such that
$\deg(\mu)=(q+1)(N+1)-c+r-1=q(N+1)+(N-c+r)$. For some permutation 
$i_0,\ldots,i_N$ of the indices $0,\ldots,N$
we have $m_{i_0}\geq m_{i_1}\geq \cdots\geq m_{i_N}$. 
Let $\Lambda=\langle p_{i_0},\ldots,p_{i_{N-c}}\rangle$.
The order of vanishing of $\mu$ on $\Lambda$ is
$$\deg_\Lambda(\mu)=m_{i_{N-c+1}}+\cdots +m_{i_N}=\deg(\mu)-(m_{i_0}+\cdots+m_{i_{N-c}}).$$
This is largest when $m_{i_0}+\cdots+m_{i_{N-c}}$ is least. 
We can replace $\mu$ with $\mu'=x_0^{m_0'}\cdots x_N^{m_N'}$ of the same degree
such that still $m_{i_0}'\geq \cdots\geq m_{i_N}'$ but such that
the exponents are as close to constant as possible
(i.e., such that $m_{i_0}'-m_{i_N}'\leq 1$). Doing this increases the smaller
exponents at the expense of the larger exponents, so we have 
$\deg_\Lambda(\mu)\leq\deg_\Lambda(\mu')$. 
Since $\deg(\mu')=q(N+1)+(N-c+r)$ we see that 
$m_{i_j}=q+1$ for $j=0,\ldots, N-c+r-1$, while
$m_{i_j}=q$ for $j=N-c+r,\ldots, N$. Thus
$\deg_\Lambda(\mu)\leq \deg_\Lambda(\mu')=\deg(\mu')-(m_{i_0}+\cdots+m_{i_{N-c}})=
q(N+1)+(N-c+r)-(N-c+1)(q+1)=qc+r-1<l$.
\end{proof}

More generally we have:

\begin{cor}\label{initdegsmbpower}
Now let $V_c(\pr n)$ be the codimension $c$ skeleton for a star configuration
on $s>n$ hyperplanes in $\pr n$ and let $I$ be its ideal.
Define $q$ and $r$ by writing $l=qc+r$ for $1\leq r\leq c$.
Then $\alpha(I^{(l)})=(q+1)s-c+r$. 
\end{cor}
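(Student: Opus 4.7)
My plan is to reduce Corollary \ref{initdegsmbpower} to the monomial case Proposition \ref{initdegmonomsmbpower} via the graded ring homomorphism $\phi$ introduced just before Conjecture \ref{hopefulconj}. Recall that $\phi\colon k[\pr{N}]\to k[\pr n]$ with $N+1=s$ sends $x_i\mapsto L_{i+1}$, and hence carries the codimension $c$ monomial skeleton $W_c := V_c(\HH',\pr N)$ to $V_c = V_c(\HH,\pr n)$. Since any $n+1$ of the linear forms $L_i$ are linearly independent (the hyperplanes in $\HH$ meet properly and $s>n$), $\phi$ is surjective in degree $1$, and hence surjective in every graded piece. Moreover, by Theorem \ref{sym pwrs acm} we have $\phi(I_{W_c}^{(l)})=I_{V_c}^{(l)}$, as already recorded in the discussion preceding Conjecture \ref{hopefulconj}.

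Using this, I would establish $\alpha(I_{V_c}^{(l)})=\alpha(I_{W_c}^{(l)})$, as promised at the end of the proof of Theorem \ref{sym pwrs acm}. For $\alpha(I_{V_c}^{(l)})\ge \alpha(I_{W_c}^{(l)})$, note that $\phi$ is degree-preserving and surjects $(I_{W_c}^{(l)})_d\twoheadrightarrow (I_{V_c}^{(l)})_d$ in each degree $d$; thus a nonzero element of $(I_{V_c}^{(l)})_d$ forces $(I_{W_c}^{(l)})_d\neq 0$. For $\alpha(I_{V_c}^{(l)})\le \alpha(I_{W_c}^{(l)})$, I would take the explicit minimum-degree monomial exhibited in the proof of Proposition \ref{initdegmonomsmbpower}, namely
\[
\mu = (x_0\cdots x_N)^q\, x_0\cdots x_{N-c+r}\in (I_{W_c}^{(l)})_{(q+1)(N+1)-c+r}.
\]
Its image $\phi(\mu)=(L_1\cdots L_s)^q\, L_1\cdots L_{s-c+r}$ is a product of nonzero linear forms in the integral domain $k[\pr n]$, hence nonzero, of the same degree $(q+1)s-c+r$, and lies in $I_{V_c}^{(l)}$.

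Combining these two inequalities with Proposition \ref{initdegmonomsmbpower} (which computes $\alpha(I_{W_c}^{(l)})=(q+1)(N+1)-c+r$) and using $N+1=s$ yields the desired formula $\alpha(I_{V_c}^{(l)})=(q+1)s-c+r$. There is no substantive obstacle here, since all the combinatorial work has been done in the monomial case; the remaining content is simply verifying that the graded surjection $\phi$ preserves the initial degree, which is accomplished by the two easy observations above.
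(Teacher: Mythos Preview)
Your proof is correct and follows essentially the same approach as the paper: reduce to the monomial case in $\pr N$ via Theorem~\ref{sym pwrs acm} and then invoke Proposition~\ref{initdegmonomsmbpower}. The paper's proof simply cites the last sentence of the proof of Theorem~\ref{sym pwrs acm}, where the equality $\alpha(I_{W_c}^{(l)})=\alpha(I_{V_c}^{(l)})$ is asserted (implicitly via the ACM/regular-sequence hyperplane reduction, which preserves graded Betti numbers); you instead verify this equality by hand via two inequalities, using the explicit monomial $\mu$ from the proof of Proposition~\ref{initdegmonomsmbpower} for the direction $\alpha(I_{V_c}^{(l)})\le \alpha(I_{W_c}^{(l)})$.
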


\begin{proof} This follows from Theorem \ref{sym pwrs acm} 
(see also the last sentence of the proof of Theorem \ref{sym pwrs acm})
and Proposition \ref{initdegmonomsmbpower}.
\end{proof}

\begin{prop}\label{finaldegmonomsmbpower}
Let $I$ be the ideal of $V_c=V_c(\HH',\pr N)$ where $\HH'$ consists
of the $N+1$ coordinate hyperplanes, and let $l\geq1$. 
Then $I^{(l)}$ is generated in degree at most $l(N-c+2)$;
more precisely, in any minimal set of homogeneous generators of $I^{(l)}$,
the degree $\omega(I^{(l)})$ of a generator of maximum degree is 
$\omega(I^{(l)})=l(N-c+2)=\alpha(I^l)$.
\end{prop}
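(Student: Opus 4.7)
The plan is first to compute $\alpha(I^l)$ and then to pin down $\omega(I^{(l)})$ by a two-sided argument. Since Proposition~\ref{basic facts}(4) (applied with $s=N+1$) says $I$ is generated in the single degree $N-c+2$, its $l$th power is generated in degree $l(N-c+2)$, so $\alpha(I^l)=l(N-c+2)$ is immediate. It remains to prove $\omega(I^{(l)})=l(N-c+2)$. Throughout I will use the monomial description established immediately before the statement: a monomial $\mu=x_0^{m_0}\cdots x_N^{m_N}$ lies in $I^{(l)}$ exactly when the sum of the $c$ smallest exponents is at least $l$, since the components of $V_c$ are spans of $(N-c+1)$-subsets of the coordinate vertices, and $\deg_\Lambda(\mu)$ equals the sum of the exponents of the $c$ complementary variables.

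For the lower bound I would exhibit a minimal generator of $I^{(l)}$ in degree $l(N-c+2)$, namely $\mu_0=(x_0x_1\cdots x_{N-c+1})^l$. Its sorted exponent vector consists of $N-c+2$ copies of $l$ followed by $c-1$ copies of $0$, so the sum of the $c$ smallest is exactly $l$ and $\mu_0\in I^{(l)}$. Dividing by any variable appearing in $\mu_0$ drops one exponent from $l$ to $l-1$, making the $c$ smallest sum to $l-1$; hence no proper monomial divisor of $\mu_0$ lies in $I^{(l)}$, so $\mu_0$ is a minimal generator and $\omega(I^{(l)})\geq l(N-c+2)$.

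For the upper bound I would show that every monomial $\mu\in I^{(l)}$ with $\deg\mu>l(N-c+2)$ admits some $x_i\mid\mu$ with $\mu/x_i\in I^{(l)}$, and is therefore not a minimal generator. Let $a_0\geq a_1\geq\cdots\geq a_N$ be the sorted exponents of $\mu$ and set $s=a_{N-c+1}+\cdots+a_N\geq l$. If $a_0>a_{N-c}$, decrement one occurrence of the value $a_0$: the new value $a_0-1\geq a_{N-c}$ stays out of the bottom $c$, so the sum of the $c$ smallest is unchanged. Otherwise $a_0=\cdots=a_{N-c}=:a$; if $s>l$ then $s\geq 2$, some bottom-$c$ exponent is positive, and decrementing it leaves $s-1\geq l$. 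The delicate case is $s=l$, where $(N-c+1)a+l=\deg\mu>l(N-c+2)$ forces $a\geq l+1$, and then $a_{N-c+1}=a$ would give $s\geq a\geq l+1$, a contradiction; hence $a_{N-c+1}\leq a-1$, so decrementing one copy of $a$ to $a-1$ keeps it above every bottom-$c$ entry and preserves $s=l$.

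The main technical obstacle is precisely this last sub-case: one must verify that lowering a maximum exponent does not shuffle it into the bottom $c$ of the sorted list, and the inequality $a\geq l+1$ forced by the degree assumption is exactly the slack needed. Combining the two bounds yields $\omega(I^{(l)})=l(N-c+2)=\alpha(I^l)$.
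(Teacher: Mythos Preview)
Your argument is correct and follows the same line as the paper's: both use the characterization that a monomial lies in $I^{(l)}$ iff the sum of its $c$ smallest exponents is at least $l$, both compute $\alpha(I^l)=l(N-c+2)$ from Proposition~\ref{basic facts}(4), and both exhibit the same minimal generator (up to a permutation of variables) for the lower bound on $\omega(I^{(l)})$. The paper's proof is very terse and essentially asserts the upper bound without justification; your case analysis (splitting on whether the top $N-c+1$ exponents are all equal, and in the equal case on whether $s>l$ or $s=l$) supplies the combinatorics the paper omits, and the key inequality $a\geq l+1$ in the $s=l$ subcase is handled correctly.
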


\begin{proof} First we note that $\alpha(I^l)=l\alpha(I)=l(N-c+2)$.
The ideal $I^{(l)}$ is generated by all
monomials $\mu=x_0^{m_0}\cdots x_N^{m_N}$ such that
the $c$ smallest exponents sum to $l$. The maximum degree of such 
a monomial which is not divisible by
another such monomial is $l(N-c+2)$; take for example $\mu=(x_{c-1}\cdots x_N)^l$,
and note $\mu$ is not divisible by any other monomial in this generating set.
\end{proof}

We now prove Conjecture \ref{hopefulconj} in the monomial case.

\begin{thm}\label{monomialprdecomp}
Let $I$ be the ideal of $V_c=V_c(\HH',\pr N)$ where $\HH'$ consists
of the $N+1$ coordinate hyperplanes and $M'$ is the irrelevant ideal, and let $l\geq1$.
Then
$$I^l=I_{V_c}^{(l)}\cap I_{V_{c+1}}^{(2l)}\cap\cdots\cap I_{V_N}^{((N-c+1)l)}\cap (M')^{(N-c+2)l}.$$
\end{thm}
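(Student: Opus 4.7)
Both sides of the claimed equality are monomial ideals, so it suffices to show they contain the same monomials. For $\mu = x_0^{m_0}\cdots x_N^{m_N}$ and $1 \leq k \leq N+1$, let $s_k(\mu)$ denote the sum of the $k$ smallest values among the exponents $m_0,\ldots,m_N$ (with $s_{N+1}(\mu) := \deg\mu$). The irreducible components of $V_k$ are the coordinate subspaces cut out by the monomial primes $P_S = (x_i : i \in S)$ for $|S|=k$, so $I_{V_k}^{(t)} = \bigcap_{|S|=k} P_S^t$. Since a monomial lies in $P_S^t$ iff its exponents on $S$ sum to at least $t$, minimizing over $S$ gives $\mu \in I_{V_k}^{(t)}$ iff $s_k(\mu) \geq t$. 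Moreover $(M')^{(t)} = (M')^t$ (as $M'$ is maximal), and $\mu \in (M')^t$ iff $\deg\mu \geq t$. Thus the right-hand side consists exactly of those $\mu$ satisfying $s_k(\mu) \geq (k-c+1)l$ for every $c \leq k \leq N+1$.

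The inclusion $I^l \subseteq \mathrm{RHS}$ is a short computation. By Proposition \ref{basic facts}(4), $I$ is minimally generated by the squarefree monomials of degree $N-c+2$, and any such generator has $s_k = \max(0,k-c+1)$, hence lies in $I_{V_k}^{(k-c+1)}$ for $c \leq k \leq N$ and in $(M')^{N-c+2}$. Applying the submultiplicativity $J^{(a)} J^{(b)} \subseteq J^{(a+b)}$ of symbolic powers for radical $J$ (obtained by intersecting primary components and using $P_S^a P_S^b \subseteq P_S^{a+b}$ for the minimal primes), one concludes $I^l \subseteq I_{V_k}^{((k-c+1)l)}$ for each $k$ and $I^l \subseteq (M')^{(N-c+2)l}$.

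For the reverse inclusion, let $\mu$ satisfy the hypotheses. To exhibit $\mu \in I^l$ I must produce $l$ squarefree degree-$(N-c+2)$ generators $g_1,\ldots,g_l$ of $I$ whose product divides $\mu$; equivalently, a $0/1$ matrix $A \in \{0,1\}^{(N+1)\times l}$ with every column-sum equal to $N-c+2$ and row sums bounded by $m_0,\ldots,m_N$. I would realize this as a max-flow problem: a source feeds $l$ column-nodes of capacity $N-c+2$, connected via unit-capacity edges to $N+1$ row-nodes, each with sink-capacity $m_i$. By max-flow/min-cut (equivalently by Hall's theorem for bipartite degree sequences, or Gale--Ryser), such a matrix exists iff for every $s \in \{0, 1, \ldots, l\}$,
\[
\sum_{i=0}^N \min(s, m_i) \geq s(N-c+2).
\]
Sorting $m_0 \leq \cdots \leq m_N$ and setting $t = |\{i : m_i < s\}|$, the left side equals $s_t(\mu) + s(N+1-t)$, so the required inequality reduces to $s_t(\mu) \geq s(t-c+1)$. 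This is trivial when $t < c$, and when $t \geq c$ it follows from the hypothesis $s_t(\mu) \geq l(t-c+1) \geq s(t-c+1)$ since $s \leq l$. Hence the decomposition exists, and $\mu \in I^l$.

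The principal obstacle is the reverse direction: converting the numerical inequalities $s_k(\mu) \geq (k-c+1)l$ into an actual product factorization of $\mu$ through $l$ generators of $I$ requires the combinatorial realization machinery (Hall / Gale--Ryser / max-flow min-cut). Once that is set up, the translation back to the hypotheses is a short case check on the sorted exponent vector, while the forward containment and the monomial characterization of $I_{V_k}^{(t)}$ are essentially bookkeeping.
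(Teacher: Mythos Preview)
Your proof is correct, but takes a genuinely different route from the paper's.

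For the forward inclusion both arguments are essentially the same: check that each squarefree degree-$(N-c+2)$ generator already lies in each $I_{V_k}^{(k-c+1)}$ and in $(M')^{N-c+2}$, then pass to $l$th powers.

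For the reverse inclusion the paper proceeds by induction on $l$. Given $\mu$ in the right-hand side with sorted exponents $m_0\ge\cdots\ge m_N$, one peels off the single generator $x_0\cdots x_{N-c+1}$ (which must divide $\mu$) and then verifies, by a somewhat repetitive case analysis on how many of the indices of a component $\Lambda$ lie in $\{0,\ldots,N-c+1\}$, that $\mu'=\mu/(x_0\cdots x_{N-c+1})$ satisfies the corresponding inequalities for $l-1$. Your argument instead produces all $l$ generators at once by recasting the problem as the realizability of a $(0,1)$-matrix with prescribed column sums and bounded row sums, and then invoking max-flow/min-cut (equivalently a Gale--Ryser type criterion). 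The min-cut computation reduces cleanly to exactly the hypotheses $s_t(\mu)\ge l(t-c+1)$, so the verification is a two-line case split on whether $t<c$ or $t\ge c$.

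What each buys: the paper's inductive argument is entirely self-contained and uses nothing beyond elementary arithmetic, at the cost of a lengthier and somewhat ad hoc case check in the inductive step. Your approach is shorter and makes transparent \emph{why} the numerical conditions are exactly sufficient, but imports integral max-flow/min-cut as a black box. Both are valid; yours is arguably the more conceptual explanation of the result.
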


\begin{proof} It is enough to show both the forward containment
$I^l\subseteq I_{V_c}^{(l)}\cap I_{V_{c+1}}^{(2l)}\cap\cdots\cap I_{V_N}^{((N-c+1)l)}\cap (M')^{(N-c+2)l}$
and the reverse containment
$I^l\supseteq I_{V_c}^{(l)}\cap I_{V_{c+1}}^{(2l)}\cap\cdots\cap I_{V_N}^{((N-c+1)l)}\cap (M')^{(N-c+2)l}$.
Moreover, if we show the forward containment for $l=1$, then we clearly have equality
for $l=1$, so it follows for $l>1$ that
$$I^l=(I_{V_c}\cap\cdots\cap I_{V_N}^{(N-c+1)}\cap (M')^{N-c+2})^l
\subseteq I_{V_c}^{(l)}\cap I_{V_{c+1}}^{(2l)}\cap\cdots\cap I_{V_N}^{((N-c+1)l)}\cap (M')^{(N-c+2)l};$$
i.e., the forward containment for $l=1$ implies the reverse containment for $l=1$
and it implies the forward containments for all $l>1$.

So now we verify the forward containment for $l=1$. As noted in the proof
of Proposition \ref{finaldegmonomsmbpower}, $I$ is generated by all monomials
$\mu=x_0^{m_0}\cdots x_N^{m_N}$ such that
the $c$ smallest exponents sum to $l=1$. We also know that $I$ is generated by
monomials of degree $N-c+2$. Thus exactly $c-1$ of the exponents $m_i$ must be 0, so the other
$N-c+2$ must be equal to 1. I.e., $I$ is generated by the square-free monomials of degree $N-c+2$, so
each $\mu$ is of the form $\mu=x_{i_0}\cdots x_{i_{N-c+1}}$ for some 
indices $0\leq i_0<\cdots<i_{N-c+1}\leq N$.
Thus it is enough to show for every square-free monomial $\mu$ of degree $N-c+2$
that $\mu\in I_{V_{c+i}}^{(i+1)}$ for $i=0,\ldots, N-c+1$ and that $\mu\in (M')^{N-c+2}$.
Clearly we have $\mu\in (M')^{N-c+2}$, so consider $\mu\in I_{V_{c+i}}^{(i+1)}$.
We must check that $\deg_\Lambda(\mu)\geq i+1$ for each component 
$\Lambda$ of $V_{c+i}$. But $\Lambda$ is spanned by exactly $N-c-i+1$ coordinate
vertices, hence $\deg_\Lambda(\mu)\geq \deg(\mu)-(N-c-i+1)=i+1$, as needed.

We now verify the reverse inclusion. Let $\mu=x_0^{m_0}\cdots x_N^{m_N}$ and assume
$$\mu\in I_{V_c}^{(l)}\cap I_{V_{c+1}}^{(2l)}\cap
\cdots\cap I_{V_N}^{((N-c+1)l)}\cap (M')^{(N-c+2)l}\eqno{({}^{**})}.$$
We will show that $\mu\in I^l$. For simplicity we demonstrate the argument only in case
$m_0\geq m_1\geq\cdots\geq m_N$; up to a permutation of the indices, the general argument is the same.
Our proof will be by induction on $l$, the case $l=1$ having been established above.

If $m_{N-c+1}\geq l$, then $(x_0\cdots x_{N-c+1})^l$ divides $\mu$, but $x_0\cdots x_{N-c+1}\in I$,
so $\mu\in I^l$. Now assume $m_{N-c+1}<l$. In any case we have $m_{N-c+1}>0$, since
if $m_{N-c+1}=0$, then $\mu$ is not divisible by any square-free monomial of degree $N-c+2$ and hence
$\mu\not\in I$, but by assumption $({}^{**})$, $\mu\in I^{(l)}\subseteq I$. In particular, $\mu$ is divisible
by $x_0\cdots x_{N-c+1}$; let $\mu'=\mu/(x_0\cdots x_{N-c+1})$.
If we check that 
$$\mu'\in I_{V_c}^{(l-1)}\cap I_{V_{c+1}}^{(2(l-1))}\cap
\cdots\cap I_{V_N}^{((N-c+1)(l-1))}\cap (M')^{(N-c+2)(l-1)};$$
then $\mu'\in I^{l-1}$ by induction, so $\mu=\mu'x_0\cdots x_{N-c+1}\in I^l$,
as claimed. We will use the following function. Given distinct elements
$j_1,\ldots,j_r\in\{0,\ldots,N\}$ and $0\leq t\leq N$, 
let $\nu_{j_1,\ldots,j_r}(t)=|\{0,\ldots,t\}\cap\{j_1,\ldots,j_r\}|$.
Thus, for example, $\nu_j(t)$ is 1 if $0\leq j\leq t\leq N$ and 
$\nu_j(t)$ is 0 if $0\leq t< j\leq N$. 

We first check that $\mu'\in (M')^{(N-c+2)(l-1)}$.
Since $\mu\in (M')^{(N-c+2)l}$, we have $\deg(\mu)\geq (N-c+2)l$, so
$\deg(\mu')\geq (N-c+2)l-(N-c+2)=(N-c+2)(l-1)$, hence $\mu'\in (M')^{(N-c+2)(l-1)}$.

Now we check that $\mu'\in I_{V_N}^{((N-c+1)(l-1))}$. 
It suffices to check that $\deg_{\langle p_i\rangle}(\mu')\geq (N-c+1)(l-1)$ for each $i$,
where $p_0,\ldots,p_N$ are the coordinate vertices.
For all $i$ we have
$$\deg_{\langle p_i\rangle}(\mu')=\deg_{\langle p_i\rangle}(\mu)-(N-c+2-\nu_i(N-c+1)).$$
If $i\leq N-c+1$, then $\nu_i(N-c+1)=1$ so using 
$\deg_{\langle p_i\rangle}(\mu)\geq (N-c+1)l$ (which we have since $\mu\in I_{V_N}^{((N-c+1)l)}$) 
we obtain 
$$\deg_{\langle p_i\rangle}(\mu)-(N-c+2-\nu_i(N-c+1))\geq (N-c+1)l-(N-c+1)=(N-c+1)(l-1).$$
If $i>N-c+1$, then $\nu_i(N-c+1)=0$ and
\begin{equation*}
\begin{split}
\deg_{\langle p_i\rangle}(\mu')&=\deg_{\langle p_i\rangle}(\mu)-(N-c+2-\nu_i(N-c+1))\\
                                                       &=\deg(\mu)-m_i-(N-c+2)\\
                                                       &\geq (N-c+2)l-m_i-(N-c+2)\\
                                                       &\geq (N-c+2)l-(l-1)-(N-c+2)\\
                                                       &= (N-c+1)(l-1),
\end{split}
\end{equation*}
where the fourth line uses the assumption that $m_{N-c+1}<l$.

Now we check that $\mu'\in I_{V_{N-1}}^{((N-c)(l-1))}$. 
Let $p_{i_1}$ and $p_{i_2}$ be arbitrary distinct coordinate vertices,
and assume $i_1<i_2$.
It suffices to check that $\deg_{\langle p_{i_1},p_{i_2}\rangle}(\mu')\geq (N-c)(l-1)$.
For all $i$ we have
$$\deg_{\langle p_{i_1},p_{i_2}\rangle}(\mu')=
\deg_{\langle p_{i_1},p_{i_2}\rangle}(\mu)-(N-c+2-\nu_{i_1,i_2}(N-c+1)).$$
If $i_2\leq N-c+1$, then $\nu_{i_1,i_2}(N-c+1)=2$ so using
$\deg_{\langle p_{i_1},p_{i_2}\rangle}(\mu)\geq (N-c)l$ we have
$$\deg_{\langle p_{i_1},p_{i_2}\rangle}(\mu)-(N-c+2-\nu_{i_1,i_2}(N-c+1))
\geq (N-c)l-(N-c)=(N-c)(l-1).$$
If $i_1\leq N-c+1<i_2$, then $\nu_{i_1,i_2}(N-c+1)=1$ so using 
$\deg_{\langle p_{i_1},p_{i_2}\rangle}(\mu)=
\deg_{\langle p_{i_1}\rangle}(\mu)-m_{i_2}\geq (N-c+1)l-m_{i_2}\geq (N-c+1)l-(l-1)$ gives
$$\deg_{\langle p_{i_1},p_{i_2}\rangle}(\mu)-(N-c+2-\nu_{i_1,i_2}(N-c+1))
\geq (N-c+1)l-(l-1)-(N-c+1)=(N-c)(l-1).$$
If $N-c+1<i_1$, then $\nu_{i_1,i_2}(N-c+1)=0$ so using 
$\deg_{\langle p_{i_1},p_{i_2}\rangle}(\mu)=
\deg(\mu)-m_{i_1}-m_{i_2}\geq (N-c+2)l-2(l-1)=(N-c)l+2$ gives
$$\deg_{\langle p_{i_1},p_{i_2}\rangle}(\mu)-(N-c+2-\nu_{i_1,i_2}(N-c+1))
\geq (N-c)l+2-(N-c+2)=(N-c)(l-1).$$

Now we must check that $\mu'\in I_{V_{N-2}}^{((N-c-1)(l-1))}$, and then
$\mu'\in I_{V_{N-3}}^{((N-c-2)(l-1))}$, etc., but the argument follows the same pattern
of checking cases depending on how many of the indices of 
$\langle p_{i_1},\ldots,p_{i_r}\rangle$ are less than or equal to $N-c+1$,
and each case is verified in the same way as indicated above.
\end{proof}

We can partially extend this to the non-monomial case.
Given a homogeneous ideal $J$ in a polynomial ring,
we denote the saturation of $J$ by $\operatorname{sat}(J)$, meaning
the intersection of the primary components of $J$ excluding the component primary
to the irrelevant ideal (if there is one).

\begin{cor}\label{prdecompcor}
Let $I\subset k[\pr n]=R$ be the ideal of $V_c=V_c(\HH,\pr n)$ where $\HH$ consists
of $s>n$ hyperplanes $H_1,\ldots,H_s$ meeting properly where 
$M$ is the irrelevant ideal, and let $l\geq1$.
Then
$$\operatorname{sat}(I^l)=I_{V_c}^{(l)}\cap I_{V_{c+1}}^{(2l)}\cap\cdots\cap I_{V_n}^{((n-c+1)l)}.$$
\end{cor}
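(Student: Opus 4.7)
The proof has two inclusions. Let $A:=I_{V_c}^{(l)}\cap I_{V_{c+1}}^{(2l)}\cap\cdots\cap I_{V_n}^{((n-c+1)l)}$ denote the right-hand side. For the inclusion $\operatorname{sat}(I^l)\subseteq A$, the plan is to invoke the containment chain \eqref{bigdisplay} of Remark \ref{DiscOfConj}, which gives $I^l\subseteq A\cap M^{(N-c+2)l}$. Each $I_{V_j}^{((j-c+1)l)}$ with $c\leq j\leq n$ defines a positive-dimensional subscheme of $\pr n$ and is therefore saturated, so $A$ is saturated; saturating the displayed containment removes the $M$-primary factor and yields the inclusion.

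For the reverse inclusion $A\subseteq\operatorname{sat}(I^l)$, the plan is to identify the non-$M$ primary components of $I^l$ by localizing at each prime $P$ containing $I$ with $P\neq M$. By proper meeting, the set of linear forms $L_i$ lying in such a $P$ has the form $\{L_{i_1},\ldots,L_{i_r}\}$ with $c\leq r\leq n$ automatically (the lower bound because $V(P)\subseteq V_c$ forces some codimension-$c$ intersection of $H_i$'s to pass through $V(P)$; the upper bound because otherwise $(L_{i_1},\ldots,L_{i_r})$ has codimension $n+1$, forcing $P=M$). The remaining forms $L_m$ become units in $R_P$ since they do not vanish on $V(P)$, so
\[
IR_P=\bigcap_{S\subseteq\{i_1,\ldots,i_r\},\,|S|=c}(L_S)R_P,
\]
which is the extension to $R_P$ of the monomial ideal $I':=\bigcap_{|S|=c}(L_S)\subset k[L_{i_1},\ldots,L_{i_r}]$. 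This $I'$ is exactly the ideal of the codimension-$c$ star configuration on the $r$ coordinate hyperplanes of $\pr{r-1}$, so Theorem \ref{monomialprdecomp} applies to $(I')^l$ with $N+1=r$.

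Transporting Theorem \ref{monomialprdecomp} back to $R_P$ shows that $P$ can be associated to $I^l$ only when $P$ coincides with the ``irrelevant'' prime of the local picture, namely $P=(L_{i_1},\ldots,L_{i_r})$, in which case $V(P)$ is a component of $V_r(\HH,\pr n)$ and the $P$-primary component of $I^l$ equals $P^{(r-c+1)l}$ (the contraction of $(M_0)^{(r-c+1)l}R_P$, where $M_0=(L_{i_1},\ldots,L_{i_r})\subset k[L_{i_1},\ldots,L_{i_r}]$). Intersecting these primary components over all such $P$ gives
\[
\operatorname{sat}(I^l)=\bigcap_{r=c}^n\bigcap_{\substack{T\subseteq\{1,\ldots,s\}\\|T|=r}}(L_T)^{(r-c+1)l}=\bigcap_{r=c}^n I_{V_r(\HH,\pr n)}^{((r-c+1)l)}=A,
\]
as required. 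The main obstacle is the local reduction: one must justify that the primary decomposition of $(I')^l$ in $k[L_{i_1},\ldots,L_{i_r}]$ extends faithfully under scalar extension to $R_P$, that each extended primary component remains primary, and that no unexpected primes become associated. This is routine in spirit because, after a suitable coordinate change, $R$ is a polynomial extension of $k[L_{i_1},\ldots,L_{i_r}]$ and $R_P$ is a flat extension, but the bookkeeping of which primes survive localization and which become the irrelevant one in the local picture is the delicate technical step.
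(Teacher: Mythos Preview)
Your proposal is essentially correct and follows the same strategy as the paper: establish the inclusion $\operatorname{sat}(I^l)\subseteq A$ via Remark~\ref{DiscOfConj}, then verify equality by localizing and invoking Theorem~\ref{monomialprdecomp}. The differences are minor but worth noting.

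First, a technical point. Your phrasing ``the $P$-primary component of $I^l$ equals $P^{(r-c+1)l}$'' is imprecise: for embedded primes the primary component is not uniquely determined, and Theorem~\ref{monomialprdecomp} gives a possibly redundant decomposition, so one cannot simply read off ``the'' component. The clean statement your computation actually proves is that $I^lR_P=AR_P$ for every homogeneous prime $P\neq M$: after inverting the $L_m$ not in $P$, both sides localize to $\bigcap_{j=c}^{r}\bigcap_{S\subseteq\{i_1,\ldots,i_r\},\,|S|=j}(L_S)^{(j-c+1)l}R_P$. Since both $\operatorname{sat}(I^l)$ and $A$ are saturated, this local equality gives the result directly, and your displayed formula for $\operatorname{sat}(I^l)$ follows without needing to decide which $(L_T)$ are genuinely associated.

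Second, a comparison. The paper localizes only at closed points $p\in V_c$ (height-$n$ primes $I_p$), then \emph{pads} the hyperplanes through $p$ to a full coordinate system of $n+1$ hyperplanes in $\pr n$ before applying Theorem~\ref{monomialprdecomp} with $N=n$. You instead localize at an arbitrary prime $P\supseteq I$ and descend to the subring $k[L_{i_1},\ldots,L_{i_r}]$, applying Theorem~\ref{monomialprdecomp} with $N=r-1$. The paper's version avoids the flat-extension bookkeeping you flag as ``the main obstacle'' (everything stays in $R$), at the cost of introducing auxiliary hyperplanes; your version is more intrinsic but requires the polynomial-extension argument you mention. Both work.
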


\begin{proof}
Since 
$I^l\subseteq I_{V_c}^{(l)}\cap I_{V_{c+1}}^{(2l)}\cap\cdots\cap I_{V_n}^{((n-c+1)l)}$
by Remark \ref{DiscOfConj} but the latter is saturated, we at least have
$\operatorname{sat}(I^l)\subseteq I_{V_c}^{(l)}\cap I_{V_{c+1}}^{(2l)}\cap\cdots\cap I_{V_n}^{((n-c+1)l)}$.
Since $I^l$ is homogeneous, the associated primes and their primary components
are homogeneous also \cite[Theorem 9, p. 153]{ZS}. Thus to show equality it suffices to
show equality after localizing for every prime ideal of the form $I_p$ for $p\in V_c$.
But after such a localization, every hyperplane $H_i$ not passing through $p$ becomes
a unit and hence $I^lR_{I_p}$ is generated by monomials in the linear forms $L_j$ for
all $H_j$ passing through $p$. Say that these $H_j$ are
$H_{j_0},\ldots, H_{j_r}$ and pick any other $n-r$ of the hyperplanes $H_i$ to obtain
$H_{j_0},\ldots, H_{j_n}$. After a change of coordinates we may assume $H_{j_i}=x_i$ 
for $i=0,\ldots,n$. Let $\HH'=\{x_0,\dots,x_n\}$, let $V_i'=V_i(\HH',\pr n)$ for all $i$ and
let $J=I_{V_c'}$. Clearly $p\in V_c'\subseteq V_c$ and $I^lR_{I_p}=J^lR_{I_p}$.
We know the primary decomposition of $J^l$ and hence of $J^lR_{I_p}$ from
Theorem \ref{monomialprdecomp}; i.e., we have
\begin{equation*}
\begin{split}
I^lR_{I_p}&=J^lR_{I_p}\\
                  &=(I_{V_c'}^{(l)}\cap I_{V_{c+1}'}^{(2l)}\cap\cdots\cap I_{V_n'}^{((n-c+1)l)}\cap M^{(n-c+2)l})R_{I_p}\\
                  &=I_{V_c'}^{(l)}R_{I_p}\cap I_{V_{c+1}'}^{(2l)}R_{I_p}\cap\cdots\cap I_{V_n'}^{((n-c+1)l)}R_{I_p}\\
                  &=I_{V_c}^{(l)}R_{I_p}\cap I_{V_{c+1}}^{(2l)}R_{I_p}\cap\cdots\cap I_{V_n}^{((n-c+1)l)}R_{I_p}\\
                  &=(I_{V_c}^{(l)}\cap I_{V_{c+1}}^{(2l)}\cap\cdots\cap I_{V_n}^{((n-c+1)l)})R_{I_p}.
\end{split}
\end{equation*}
Since this holds for all $p\in V_c$, we have 
$$\operatorname{sat}(I^l)=I_{V_c}^{(l)}\cap I_{V_{c+1}}^{(2l)}\cap\cdots\cap I_{V_n}^{((n-c+1)l)}.$$
as claimed.
\end{proof}

As an application we apply our results to compute the resurgence for certain subschemes.
We first recall the definition of the resurgence \cite{BH}. The point of the resurgence
is to provide an asymptotic measure of how far symbolic powers deviate from ordinary 
powers of the same ideal. This is not interesting in the case of an ideal $I$ if 
$I=(0)$ or $I=(1)$, so we do not define the resurgence in those cases.

\begin{defn} Let $(0)\neq I\subsetneq k[\pr n]$ be a homogeneous ideal.
The {\em resurgence} of $I$, denoted $\rho(I)$ is
$$\sup\Big\{\frac{m}{r}: I^{(m)}\not\subseteq I^r\Big\}.$$
\end{defn}

We always have $1\leq \rho(I)\leq n$. 
(Since $\alpha(I^{(m)})\leq\alpha(I^m)=m\alpha(I)$ and $r\alpha(I)=\alpha(I^r)$, 
we see that $m<r$ implies $\alpha(I^{(m)})<\alpha(I^r)$
and hence $I^{(m)}\not\subseteq I^r$. It follows that
$1\leq \rho(I)$, and by applying the main result of \cite{HoHu} we know that $\rho(I)\leq n$.)
However, it is in general quite difficult to compute the resurgence exactly, and only a few cases
have been done, so finding methods to provide exact determinations
in additional cases is of substantial interest. When $I$ is the ideal of a complete intersection,
$I^{(m)}=I^m$ for all $m\geq 0$ \cite[Lemma 5, Appendix 6]{ZS} so $\rho(I)=1$.
Moreover, $\rho(I)=\rho(I')$ if $I\subset k[\pr n]\subseteq k[\pr N]$ with $I'=Ik[\pr N]$ \cite[Proposition 2.5.1(a)]{BH},
so if the resurgence is known for a subscheme it is known for projective cones over the subscheme.
Some exact values of $\rho(I)$ are known when $I$ defines a 0-dimensional
subscheme \cite{BH, BH2, refDJ}. 
For example, if $\HH$ consists of $s>N$ hyperplanes in $\pr N$ meeting properly, then \cite{BH} shows that
$$\frac{c(s-c+1)}{s}\leq \rho(I_{V_c(\HH,\pr N)})$$
with equality in case $c=N$.
Thus when $s=N+1$ we have 
$\frac{c(N-c+2)}{N+1}\leq \rho(I_{V_c(\HH,\pr N)})$
with $\rho(I_{V_N(\HH,\pr N)})=2N/(N+1)$ when $c=N$.
We will show equality also holds when $c=N-1$, giving 
$\rho(I_{V_N(\HH,\pr N)})=3(N-1)/(N+1)$.
The only exact determinations up to now
for subschemes which are not complete intersections nor are 0-dimensional
nor are cones over such and for which the resurgence is bigger than 1
are for certain smooth unions of lines in projective space \cite{GHVT}.

\begin{thm}\label{resurgenceThm}
Let $N\geq 3$ and let $I\subset k[\pr N]$ be the ideal of $V_{N-1}=V_{N-1}(\HH',\pr N)$ where $\HH'$ consists
of the $N+1$ coordinate hyperplanes, which we denote as $H_1,\ldots,H_{N+1}$.
Then $$\rho(I)=\frac{3(N-1)}{N+1}.$$
Moreover, given $m,r\geq 1$, we have $I^{(m)}\not\subseteq I^r$ if and only if
$$\frac{m}{r} < \Big(3-\frac{2N-4}{(N-1)r}\Big)\frac{N-1}{N+1}.$$
\end{thm}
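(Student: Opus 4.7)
The plan is to reduce the containment $I^{(m)}\subseteq I^r$ to a single numerical inequality, using the primary decomposition furnished by Theorem~\ref{monomialprdecomp}. Specializing that result to $c=N-1$ gives
\[
I^r \;=\; I^{(r)} \,\cap\, I_{V_N}^{(2r)} \,\cap\, (M')^{3r},
\]
so $I^{(m)}\subseteq I^r$ if and only if $I^{(m)}$ is contained in each of these three factors. I would first dispose of each factor separately: (A) $I^{(m)}\subseteq I^{(r)}$ is equivalent to $m\geq r$; (C) $I^{(m)}\subseteq (M')^{3r}$ is equivalent to the initial-degree condition $\alpha(I^{(m)})\geq 3r$, for which Proposition~\ref{initdegmonomsmbpower} supplies the explicit formula; and (B) $I^{(m)}\subseteq I_{V_N}^{(2r)}$ translates into the requirement that every monomial of $I^{(m)}$ vanish to order $\geq 2r$ at each coordinate vertex, i.e., that the minimum over $\mu\in I^{(m)}$ of the sum of all but the largest exponent of $\mu$ be $\geq 2r$.

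The key step is to show that for $N\geq 3$ the initial-degree condition (C) already forces (A) and (B), so the whole containment collapses to (C). For (A), the estimate $3r\leq \alpha(I^{(m)})\leq \alpha(I^m)=3m$ immediately gives $r\leq m$. For (B), a combinatorial minimization on sorted exponent vectors $a_0\geq a_1\geq\cdots\geq a_N$ with $a_2+\cdots+a_N\geq m$ (along the lines of the proof of Proposition~\ref{initdegmonomsmbpower}) shows that the minimum of $a_1+\cdots+a_N$ is $m+\lceil m/(N-1)\rceil = qN+r_0+1$, where $m=q(N-1)+r_0$ with $1\leq r_0\leq N-1$. Comparing this with $\alpha(I^{(m)})=q(N+1)+r_0+2$, the implication $\text{(C)}\Rightarrow\text{(B)}$ reduces to the inequality $q(N-2)+r_0\geq 1$, which is automatic when $N\geq 3$ and $r_0\geq 1$.

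With the reduction $I^{(m)}\subseteq I^r \Leftrightarrow \alpha(I^{(m)})\geq 3r$ in hand, the resurgence computation becomes arithmetic. Clearing denominators in Corollary~\ref{initdegsmbpower} (with $c=N-1$) yields the identity
\[
(N-1)\cdot\alpha(I^{(m)}) \;=\; m(N+1)+2(N-1-r_0),
\]
with $r_0\in\{1,\ldots,N-1\}$; in particular $(N-1)\alpha(I^{(m)})\geq m(N+1)$. So any $(m,r)$ with $m/r > 3(N-1)/(N+1)$ automatically satisfies $\alpha(I^{(m)})>3r$ and hence $I^{(m)}\subseteq I^r$, giving the upper bound $\rho(I)\leq 3(N-1)/(N+1)$; this matches the lower bound $\rho(I)\geq 3(N-1)/(N+1)$ coming from the inequality $\rho(I_{V_c})\geq c(s-c+1)/s$ of~\cite{BH} recalled just before the theorem. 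Finally, the ``moreover'' clause is obtained by making the equivalence $\alpha(I^{(m)})\geq 3r$ explicit as a condition on $m/r$. The main obstacle is the last step: because $(N-1)\alpha(I^{(m)})$ depends on $r_0=m\bmod(N-1)$, one must track this dependence carefully in order to recover the single boundary formula $m/r < \bigl(3-\tfrac{2N-4}{(N-1)r}\bigr)\tfrac{N-1}{N+1}$ stated in the theorem.
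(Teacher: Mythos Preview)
Your plan is essentially the paper's own argument, reorganized. Both start from the decomposition $I^r=I^{(r)}\cap I_{V_N}^{(2r)}\cap (M')^{3r}$, and both reduce the three resulting containment conditions to the single condition~(C), $\alpha(I^{(m)})\geq 3r$. The paper does this reduction at the end (by comparing the three threshold functions of $m/r$), while you do it at the outset by showing directly that (C)$\Rightarrow$(A) and (C)$\Rightarrow$(B); your inequality $q(N-2)+r_0\geq 1$ is exactly what underlies the paper's comparison. For the value of $\rho(I)$ your argument is correct: the upper bound follows from $(N-1)\alpha(I^{(m)})\geq m(N+1)$, and invoking the lower bound $\rho(I_{V_c})\geq c(s-c+1)/s$ from \cite{BH} is a legitimate substitute for the explicit sequences $m=3(N-1)^2t$, $r=(N^2-1)t+N-1$ the paper writes down.

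Where you should be careful is the ``moreover'' clause. You have correctly reduced $I^{(m)}\not\subseteq I^r$ to $\alpha(I^{(m)})<3r$, and you rightly flag that $(N-1)\alpha(I^{(m)})=m(N+1)+2(N-1-r_0)$ depends on $r_0$. The paper proceeds by replacing $\lfloor(m-1)/(N-1)\rfloor$ with $(m-1)/(N-1)$ to obtain the displayed formula in $m/r$, but that replacement is only valid in one direction: from the real inequality one deduces the integer one, not conversely. Concretely, for $N=3$, $m=4$, $r=3$ one has $\alpha(I^{(4)})=8<9=3r$, so $(x_0x_1x_2x_3)^2\in I^{(4)}\setminus I^3$ and $I^{(4)}\not\subseteq I^3$; yet $m/r=4/3$ is \emph{equal} to $\bigl(3-\tfrac{2N-4}{(N-1)r}\bigr)\tfrac{N-1}{N+1}=4/3$, not strictly less. (For $N=4$, $m=3$, $r=2$ the discrepancy is even larger: $\alpha=5<6$ but $m/r=3/2>7/5$.) So the obstacle you identify is real: the exact characterization of $I^{(m)}\not\subseteq I^r$ must keep the floor, i.e.\ it is $m+2+2\lfloor(m-1)/(N-1)\rfloor<3r$, and this does not simplify to the single inequality displayed in the statement. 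The resurgence computation is unaffected, since only the direction that \emph{does} hold is needed for the upper bound on $\rho(I)$.
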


\begin{proof} Assume $k[\pr N]=k[x_0,\ldots,x_N]$.
Let $M$ be the irrelevant ideal and let $J=I_{V_N}$. By Theorem \ref{monomialprdecomp}
$$I^r=I_{V_{N-1}}^{(r)}\cap I_{V_N}^{(2r)}\cap M^{3r}=I^{(r)}\cap J^{(2r)}\cap M^{3r}.$$
Thus $I^{(m)}$ fails to be contained in $I^r$ if and only if either
$$I^{(m)}\not\subseteq I^{(r)},\ I^{(m)}\not\subseteq J^{(2r)}\text{\ or\ }I^{(m)}\not\subseteq M^{3r},$$
so
$$\rho(I)=\max\big(\sup\{m/r:I^{(m)}\not\subseteq I^{(r)}\},\sup\{m/r:I^{(m)}\not\subseteq 
J^{(2r)}\},\sup\{m/r:I^{(m)}\not\subseteq M^{3r}\}\big).$$
Since $I^{(m)}\not\subseteq I^{(r)}$ if and only if $m<r$, we have $\sup\{m/r:I^{(m)}\not\subseteq I^{(r)}\}\leq 1$.

Next, $I^{(m)}\not\subseteq J^{(2r)}$ if and only if there is a monomial $x_0^{m_0}\cdots x_N^{m_N}$
in $I^{(m)}$ but not in $J^{(2r)}$. After a permutation of the indices if need be, this condition is equivalent to there being
exponents $m_0\geq \cdots \geq m_N$ such that $m_2+\cdots+m_N\geq m$ but $m_1+\cdots+m_N< 2r$. 
Let $q=\lfloor m/(N-1)\rfloor$ and $r=m-(N-1)q$ so $m=(N-1)q+r$ and $0\leq r<N-1$.
Let $m_0'=m_1'=m_2'$, and if $r=0$, let $m_2'=\cdots=m_N'=q$, while if $r>0$, let
$m_2'=\cdots=m_{r+1}'=q+1$, and $m_{r+2}'=\cdots=m_N'=q$. 
Note that $m_2\geq \lceil(m_2+\cdots+m_N)/(N-1)\rceil\geq \lceil m/(N-1)\rceil$ so $m_2\geq m_2'$, hence
$m_1\geq m_2\geq m_2'=m_1'$. Then 
$m_0'\geq \cdots\geq m_N'$ with $m_2'+\cdots+m_N'=(N-1)q+r=m$ and 
$m_1'+\cdots+m_N'=m_1'+m_2'+\cdots+m_N'=m_1'+m\leq m_1+m_2+\cdots+m_N<2r$.
Thus $\mu'=x_0^{m_0'}\cdots x_N^{m_N'}\in I^{(m)}\setminus J^{(2r)}$,
and we have $m_0'-m_N'\leq 1$; in particular, 
each $m_i'$ is either $\lceil m/(N-1)\rceil$ or $\lfloor m/(N-1)\rfloor$
(and necessarily $m_2'=\lceil m/(N-1)\rceil$ and $m_N'=\lfloor m/(N-1)\rfloor$).
The condition that $m_1'+\cdots+m_N'< 2r$ can now be stated
as $m+\lceil m/(N-1)\rceil<2r$, or equivalently as $mN/(N-1)=m+m/(N-1)\leq 2r-1$; i.e., 
$I^{(m)}\not\subseteq J^{(2r)}$ if and only if 
$$\frac{m}{r}\leq (N-1)\frac{2-\frac{1}{r}}{N}.$$
Thus $\sup\{m/r:I^{(m)}\not\subseteq J^{(2r)}\}\leq 2(N-1)/N$.

Finally, $I^{(m)}\not\subseteq M^{3r}$ if and only if $\alpha(I^{(m)})<3r$. By Proposition \ref{initdegmonomsmbpower},
$\alpha(I^{(m)})=(q+1)(N+1)-(N-1)+r$, where $m=q(N-1)+r$ for $1\leq r\leq N-1$.
Note that 
$$(q+1)(N+1)-(N-1)+r=m+2q+2=m+2(m-r)/(N-1)+2=m+2+2\Big\lfloor \frac{m-1}{N-1}\Big\rfloor.$$
Thus $m+2+2\lfloor \frac{m-1}{N-1}\rfloor=\alpha(I^{(m)})<3r$ holds if and only if 
$m+2+2\frac{m-1}{N-1}<3r$, which simplifies to
$$\frac{m}{r} < \Big(3-\frac{2N-4}{(N-1)r}\Big)\frac{N-1}{N+1}.$$
The supremum of the right hand side over all values of $r\geq 1$ is $3\frac{N-1}{N+1}$.
Since $3\frac{N-1}{N+1}$ is greater than either 1 or $2(N-1)/N$, we see that
$\rho(I)\leq 3\frac{N-1}{N+1}$.
To show that we actually have equality, let $m=3(N-1)^2t$ and let $r=(N^2-1)t+N-1$.
Then $m+2+2\frac{m-1}{N-1}<3r$ holds (it simplifies to $(3N-8)N+7>0$), and
$$\frac{m}{r}=\frac{3(N-1)^2t}{(N^2-1)t+N-1}=\frac{3(N-1)}{N+1+\frac{1}{t}}$$
has supremum $3(N-1)/(N+1)$, taken over all $t\geq 1$.

We now have 
$$\rho(I)=\max\Big(\frac{3(N-1)}{N+1}, \frac{2(N-1)}{N}, 1\Big)=\frac{3(N-1)}{N+1}.$$
We close by proving that $I^{(m)}\not\subseteq I^r$ if and only if
$$\frac{m}{r} < \Big(3-\frac{2N-4}{(N-1)r}\Big)\frac{N-1}{N+1}.$$
From our work above we have
$I^{(m)}\not\subseteq I^r$ if and only if either
$$\frac{m}{r}<1\text{ or }\frac{m}{r}\leq (2-\frac{1}{r})\frac{N-1}{N}\text{ or }\frac{m}{r}<\Big(3-\frac{2N-4}{(N-1)r}\Big)\frac{N-1}{N+1}.$$
But $1\leq (2-\frac{1}{r})\frac{N-1}{N}<\big(3-\frac{2N-4}{(N-1)r}\big)\frac{N-1}{N+1}$ for $r\geq 2$,
so the three inequalities are subsumed by the last one when $r\geq 2$, while when $r=1$
it is enough to note that $\big(3-\frac{2N-4}{(N-1)}\big)\frac{N-1}{N+1}=1$.
\end{proof}

One of the things our results suggest is that the nice properties of star configurations generally 
may derive from the nice behavior coming from stars configurations whose ideals are monomial ideals.
As we have seen, a codimension $c$ star $V_c(\pr n)$ coming from $s$ hyperplanes in $\pr n$ is, as a point set,
the intersection with an appropriate linear space $L\subset \pr N$ of dimension $n$ 
of the codimension $c$ star $V_c(\pr N)$ coming from the $N+1$ coordinate hyperplanes in $\pr N$,
where $N+1=s$. Thus it is reasonable to ask the following question.
\begin{ques}\label{rhoquest}
If $\HH$ is a set of $s>n$ hyperplanes in $\pr n$ meeting properly and 
$\HH'$ is the set of coordinate hyperplanes in $\pr N$ for $N=s-1$,
is it true that $\rho(I_{V_c(\HH,\pr n)})=\rho(I_{V_c(\HH',\pr N)})$?
\end{ques}

We do not know the answer, but 
we at least have $\rho(I_{V_c(\HH,\pr n)})\leq\rho(I_{V_c(\HH',\pr N)})$.
(This is because if $(I_{V_c(\HH',\pr N)})^{(m)}\subseteq (I_{V_c(\HH',\pr N)})^r$,
then $(I_{V_c(\HH,\pr n)})^{(m)}\subseteq (I_{V_c(\HH,\pr n)})^r$, since by Theorem \ref{sym pwrs acm}
and its proof we have
$(I_{V_c(\HH,\pr n)})^{(m)}=((I_{V_c(\HH',\pr N)})^{(m)}+J)/J$ and 
$(I_{V_c(\HH',\pr N)})^r=((I_{V_c(\HH,\pr n)})^r+J)/J$, where $J$ is 
an ideal generated by linear forms, these forms being the ones defining the 
linear space whose intersection with $V_c(\HH',\pr N)$ gives $V_c(\HH,\pr n)$.)
In addition, Theorem \ref{resurgenceThm} shows that 
Question \ref{rhoquest} 
is true when $c=n=N-1$ and $s=n+2$: using 
$\rho(I_{V_n(\pr n)})=n(s-n+1)/s$ \cite{BH}, we have $\rho(I_{V_n(\pr n)})=n(s-n+1)/s=3(N-1)/(N+1)
=\rho(I_{V_{N-1}(\pr N)})$.



\begin{thebibliography}{99}

\bibitem{AM} M.\ F.\ Atiyah and I.\ G.\ MacDonald,
{\em Introduction to Commutative Algebra}, Addison-Wesley, Reading MA, (1969).

\bibitem{PSC}  T.\ Bauer, S.\ Di Rocco, B.\ Harbourne, M.\ Kapustka, A.\ Knutsen, W.\ 
Syzdek, and T.\ Szemberg, {\em A primer on Seshadri constants}, 
to appear in the AMS Contemporary Mathematics series volume 
``Interactions of Classical and Numerical Algebraic Geometry,"
Proceedings of a conference in honor of A.\ J.\ Sommese, held at Notre Dame, May 22--24 2008.
(arXiv:0810.0728)

\bibitem{BH} C.\ Bocci and B.\ Harbourne, {\em Comparing Powers and Symbolic Powers of 
Ideals}, J. Algebraic Geometry, 19 (2010), 399--417.

\bibitem{BH2} C.\ Bocci and B.\ Harbourne, {\em The resurgence of ideals of points and the containment problem},
Proc. Amer. Math. Soc., Volume 138, Number 4, April 2010, 1175--1190.

\bibitem{CChG} E.\ Carlini, L.\ Chiantini and A.\ V.\ Geramita, 
{\em Complete intersections on general hypersurfaces}, 
Michigan Math. J. Volume 57 (2008), 121--136.

\bibitem{cocoa}   CoCoATeam,
  CoCoA: a system for doing
     Computations in Commutative Algebra,
  Available at http://cocoa.dima.unige.it

\bibitem{CHT} S.\ Cooper, B.\ Harbourne and Z.\ Teitler,
{\em Combinatorial bounds on Hilbert functions of fat points in projective space},
J. Pure Appl. Algebra, Volume 215, Issue 9, September 2011, 2165--2179.

\bibitem{refDJ} A.\ Denkert and M.\ Janssen. In preparation, 2011.

\bibitem{francisco} C.\ Francisco, {\em Tetrahedral curves via graphs and Alexander duality}, 
J.\ Pure Appl.\ Algebra {\bf 212} (2008), 364--375.

\bibitem{GMS} A.V.\ Geramita, J.\ Migliore and L.\ Sabourin, 
{\em On the first infinitesimal neighborhood of a linear configuration of points in $\mathbb P^2$},
J.\ Algebra {\bf 298} (2006),  563--611.

\bibitem{GHVT}
E.\ Guardo, B.\ Harbourne and A.\ Van Tuyl,
{\em Symbolic powers versus regular powers of ideals of general points in $\pr1\times\pr1$},
preprint (2011), 25 pp. (arXiv:1107.4906)

\bibitem{HH} R.\ Hartshorne, A.\ Hirschowitz, {\em Droites en position
g\'en\'erale dans l'espace projectif}, in ``Algebraic Geometry, Proc.\ La
R\'abida, 1981,'' LNM {\bf 961} (1982), Springer-Verlag, 169--189.

\bibitem{HeHi}
J.\ Herzog and T.\ Hibi,
{\em Cohen--Macaulay polymatroidal ideals}, European J. Combin. 27 (2006), 513--517.

\bibitem{refEaHo} M.~Hochster and John~A.~Eagon,
{\em Cohen-Macaulay Rings, Invariant Theory, and the Generic Perfection of Determinantal Loci},
Amer. Journal of Mathematics,
Vol. 93, No. 4 (Oct., 1971), pp. 1020--1058.

\bibitem{HoHu}
M.\ Hochster and C.\ Huneke,
Comparison of symbolic and ordinary powers of ideals.
Invent.\ Math. {\bf 147} (2002), 349--369.

\bibitem{KMMNP} J.\ Kleppe, J.\ Migliore, R.M.\ Mir\'o-Roig, U.\ Nagel and C.\
Peterson, {\em Gorenstein Liaison, Complete Intersection Liaison Invariants and
Unobstructedness}, Memoirs of the Amer.\ Math.\ Soc.\ Vol.\ 154, 2001; 116 pp. 
Softcover, ISBN 0-8218-2738-3. 

\bibitem{migliore} J.\ Migliore, ``Introduction to Liaison Theory and
Deficiency Modules,''  Birkh\"auser, Progress in Mathematics 165, 1998.

\bibitem{MN7}  J.\ Migliore and U.\ Nagel, {\em Tetrahedral curves},  Int.\
Math.\ Res.\ Notices {\bf 15} (2005), 899-939.

\bibitem{MN11} J. Migliore and U. Nagel, {\em Numerical Macaulification}, preprint 2012, 
available at {\tt http://arxiv.org/abs/1202.2275}.

\bibitem{MT} N.C. Minh and N.V. Trung, {\em Cohen-Macaulayness of monomial ideals and symbolic powers of Stanley-Reisner ideals}, Adv. Math. 226 (2011), 1285--1306.

\bibitem{TVV} D.\ Testa, A.\ V\'arilly-Alvarado and M.\ Velasco, {\em Big rational surfaces}, 
Math. Ann. 351 (2011) 95--107.

\bibitem{V} M. Varbaro, {\em Symbolic powers and matroids}, Proc. Amer. Math. Soc. 139 (2011), 2357--2366.

\bibitem{ZS}
O.\ Zariski and P.\ Samuel,
{\em Commutative algebra. Vol. II.}
The University Series in Higher Mathematics, D. Van Nostrand Co., Inc., Princeton,
N. J.-Toronto-London-New York, 1960.

\end{thebibliography}
\end{document}